\numberwithin{equation}{section}
\theoremstyle{plain}
\newtheorem{theorem}{Theorem}[section]
\newtheorem{lemma}[theorem]{Lemma}
\newtheorem{proposition}[theorem]{Proposition}
\newtheorem{conjecture}[theorem]{Conjecture}
\newtheorem{corollary}[theorem]{Corollary}
\theoremstyle{definition}
\newtheorem{definition}{Definition}[section]
\theoremstyle{remark}
\newtheorem{remark}{Remark}[section]
\newcommand {\calN}        {{\mathcal N}}
\newcommand {\calF}        {{\mathcal F}}
\newcommand {\calS}        {{\mathcal S}}
\newcommand{\bitem}{\begin{itemize}}
\newcommand{\eitem}{\end{itemize}}
\newcommand{\mc}[1]{\mathcal{#1}}
\newcommand {\Rn}        {{\mathbb R^n}}
\newcommand {\Rm}        {{\mathbb R^m}}
\newcommand{\N}{\mathbb{N}}
\newcommand{\R}{\mathbb{R}}
\newcommand{\EE}{\mathbb{E}}
\newcommand{\bpm}{\begin{pmatrix}}
\newcommand{\epm}{\end{pmatrix}}
\newcommand{\bsm}{\left(\begin{smallmatrix}}
\newcommand{\esm}{\end{smallmatrix}\right)}
\newcommand{\ul}[1]{\underline{#1}}
\newcommand{\ol}[1]{\overline{#1}}
\newcommand{\mrm}[1]{\mathrm{#1}}
\newcommand{\col}[2]{{#1}_{\bullet,#2}}
\newcommand{\veps}{\varepsilon}
\DeclareMathOperator{\supp}{supp}
\title[Weak Recovery Thresholds for TomoPIV]{{C}ritical {P}arameter {V}alues and {R}econstruction
  {P}roperties of {D}iscrete {T}omography: {A}pplication to {E}xperimental
  {F}luid {D}ynamics}
\author[S.~Petra, C.~Schn\"{o}rr, A. Schr\"{o}der]{Stefania Petra, 
Christoph Schn\"{o}rr, Andreas Schr\"{o}der}
\address[S.~Petra, C.~Schn\"{o}rr]{Image and Pattern Analysis Group, University of Heidelberg, Speyerer Str.~6, 69115 Heidelberg, Germany}
\email{\{petra,schnoerr\}@math.uni-heidelberg.de}
\urladdr{iwr.ipa.uni-heidelberg.de}
\address[A.~Schr\"{o}der]{Institute of Aerodynamics and Flow Technology, German Aerospace Center, Bunsenstr.~10, 37073 G\"{o}ttingen, Germany}
\email{andreas.schroeder@dlr.de}
\urladdr{dlr.de/as}
\date{} 
\thanks{Support by the German Research Foundation (DFG) is gratefully acknowledged, grant SCHN457/11.}
\keywords{compressed sensing, underdetermined systems of linear equations, sparsity, large deviation, tail bound, algebraic reconstruction, TomoPIV}
\begin{document}

\sloppy

\begin{abstract}

We analyze representative ill-posed scenarios of tomographic PIV with a focus on conditions for unique volume reconstruction. Based on sparse random seedings of a region of interest with small particles, the corresponding systems of linear projection equations are probabilistically analyzed in order to determine
(i) the ability of unique reconstruction in terms of the imaging geometry and the critical sparsity parameter, and 
(ii) sharpness of the transition to non-unique reconstruction with ghost particles when choosing the sparsity parameter improperly. The sparsity parameter directly relates to the seeding density used for PIV in experimental fluids dynamics that is chosen empirically to date. Our results provide a basic mathematical characterization of the PIV volume reconstruction problem that is an essential prerequisite for any algorithm used to actually compute the reconstruction. Moreover, we connect the sparse volume function reconstruction problem from few tomographic projections to 
major developments in compressed sensing.
\end{abstract}

\maketitle

\section{Introduction}

Motivated by an application from fluid dynamics \cite{Els-06}, we investigate
conditions for an highly underdetermined nonnegative system of linear equations
to have a unique nonnegative solution, provided it is sparse.
The sought solution is a sparse 3D image of particles immersed in a fluid
known only from its projection. This projection represents the simultaneous 2D images
captured by few camera sensors from different viewing directions, see Fig. \ref{fig:TomoPIV}.
The reconstruction of the 3D image from the 2D images employs a standard algebraic image reconstruction 
model, which assumes that the image consists of an array of unknowns (cells, voxels), and sets up 
algebraic equations for the unknowns in terms of measured projection data. The latter are the pixel entries
in the recorded 2D images that represent the integration of the original 3D light intensity
distribution along the pixels line-of-sight. The number of cameras is limited to 3 to 6 cameras, typically 4. As a consequence, the reconstruction problem becomes severely ill-posed.

Thus, we consider a huge and severely underdetermined linear system 
\begin{equation} \label{eq:Ax=b}
A x = b,\qquad A \in \R^{m \times n},\qquad
m \ll n,
\end{equation}
with the following properties: a \emph{very sparse} nonnegative measurement matrix $A$ with \emph{constant small support} of length $\ell$ of all column vectors, 
\begin{equation} \label{eq:Axb-properties}
A \geq 0,\; x \geq 0,\qquad \supp(\col{A}{j}) = \ell \ll m,\qquad \forall j = 1,\dotsc,n.
\end{equation}
and a nonnegative $k$-sparse solution vector $x$.
While $\ell$ equals the number of cameras, $k$ is equal related to the particle density 
(equal, in the present work, proportional, in practice). We also consider the discretization 
(or resolution) parameter $d$, and relate it to the number of discretization cells and number of measurements:
\begin{align}
\label{eq:m_n_2D}
m&=\ell\cdot O(d), \quad n=O(d^2), \quad {\rm \ in \ 2D},\\
\label{eq:m_n_3D}
m&=\ell\cdot O(d^2), \quad n=O(d^3), \quad {\rm in \ 3D}\ .
\end{align}

We will answer the following question: what is the maximal number of particles, depending on the image resolution 
parameter $d$, that can be localized uniquely? Formally, we want to relate the \emph{exact} recovery of $x$ from 
it's noiseless measurements $b$ to the sparsity $k$ and to the dimensions of $m,n$ of the projection matrix $A$.
Moreover, we investigate critical values of the sparsity parameter $k$ such that most $k$-sparse nonnegative solutions
are the unique nonnegative solutions of \eqref{eq:Ax=b} with high probability.

\begin{figure}
\centerline{
\includegraphics[height=0.2\textheight]{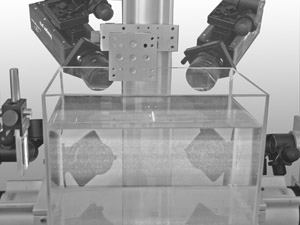} 
\hspace{0.025\textwidth}
\includegraphics[height=0.2\textheight]{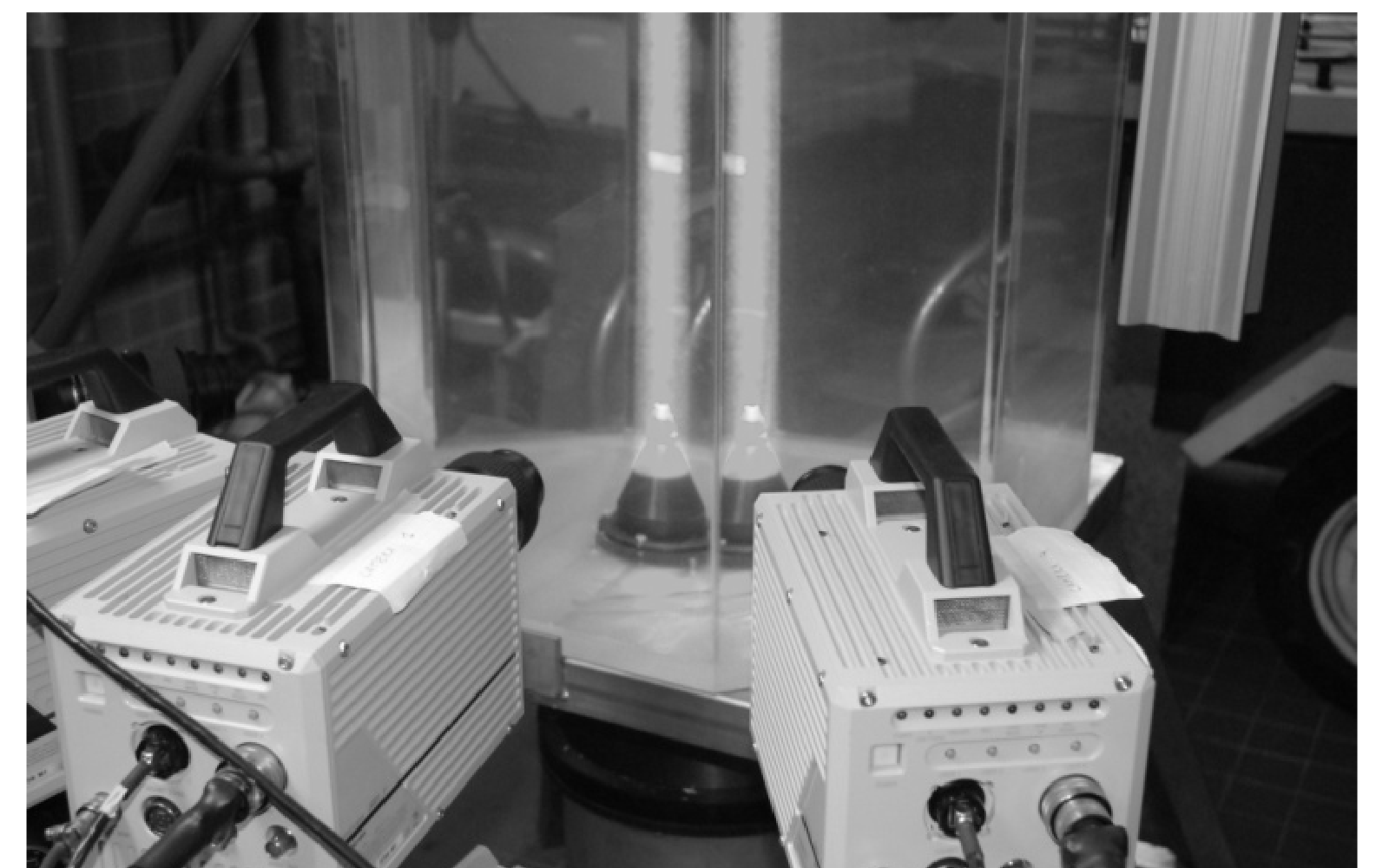}
}
\caption{
Typical camera arrangements: in circular configuration (right) or all in line (left).
}
\label{fig:TomoPIV}
\end{figure}

\subsection{Related Work}
Research on compressed sensing \cite{CompressedSensing-06,Candes-CompressiveSampling-06} focuses on properties of 
underdetermined linear systems that guarantee exact recovery of sparse or compressible signals $x$ from 
measurements $b$. Donoho and Tanner \cite{DonohoT10, DonTan10_Precise} have computed sharp reconstruction thresholds for 
random measurement matrices, such that for given a signal length $n$ and numbers of measurements $m$, the maximal 
sparsity value $k$ which guarantees perfect reconstruction can be determined explicitly.
The authors derived their results by connecting it to problem from geometric probability
that $n$ points in general position in $\Rm$ can be linearly separated \cite{Wendel-62}.
This holds with probability $\Pr(n,m)=1$ for $n/m \leq 1$, and with $\Pr(n,m) \to 1$ if $m \to \infty$ and $1 \leq n/m < 2$,
where
\begin{equation} \label{eq:WendelPR}
\Pr(n,m) = \frac{1}{2^{n-1}} \sum_{i=0}^{m-1} 
\binom{n-1}{i}\ .
\end{equation}
The authors show in \cite[Thm. 1.10]{DonohoT10} that the probability of uniqueness of a $k$-sparse nonnegative vector equals $\Pr(n-m,n-k)$, provided $A$ satisfies certain conditions which do not hold in our considered application.
Likewise, by exploiting again Wendel's theorem, Mangasarian and Recht showed \cite{Man09ProbInteger}
that a binary solution is most likely unique if $m/n>1/2$, provided that $A$ comes from a centrosymmetric distribution.
Unfortunately, the underlying distribution $A$ lacks symmetry with respect to the origin.
However, we recently showed \cite{Petra-Schnoerr-12a} for a three camera scenario that there are thresholds on sparsity 
(i.e. density of the particles), below which exact recovery will succeed and above which it fails with high probability. 
This explicit thresholds depend on the number of measurements (recording pixel in the camera arrays).
The current work investigates further geometries  and focus on an \emph{average case analysis} of conditions under which 
\emph{uniqueness} of $x$ can be expected with \emph{high probability}. A corresponding tail bound implies a weak threshold effect and criterion for adequately choosing the value of the sparsity parameter $k$. 

\subsection{Notation}
$|X|$ denotes the cardinality of a finite set $X$ and $[n] = \{1,2,\dotsc,n\}$ for $n \in \N$. 
We will denote by
$\|x\|_{0} = |\{i \colon x_{i} \neq 0 \}|$ 
and $\R_{k}^{n} = \{ x \in \R^{n} \colon \|x\|_{0} \leq k \}$ the set of $k$-sparse vectors. The corresponding sets of non-negative vectors are denoted by $\R_{+}^{n}$ and $\R_{k,+}^{n}$, respectively. The support
of a vector $x\in \R^{n}$, $\mrm{supp}(x) \subseteq [n]$,
is the set of indices of non-vanishing components of $x$. 

For a finite set $S$, the set $\mc{N}(S)$ denotes the union of all
neighbors of elements of $S$ where the corresponding relation (graph)
will be clear from the context.


$\col{A}{i}$ denotes the $i$-th column vector of a matrix $A$. For
given index sets $I, J$, matrix $A_{I J}$ denotes the submatrix of $A$ 
with rows and columns indexed by $I$ and $J$, respectively. $I^{c},
J^{c}$ denote the respective complement sets. Similarly, $b_{I}$
denotes a subvector of $b$.

$\EE[\cdot]$ denotes the expectation operation applied to a random variable and $\Pr(A)$ the 
probability to observe an event $A$.

\section{Graph Related Properties of Tomographic Projection Matrices}
\label{sec:expander}

Recent trends in compressed sensing \cite{RIP-P-SMM-08, XuHassibi_Expander} tend to replace
random dense matrices by adjacency matrices of ''high quality'' expander graphs.
Explicit constructions of such expanders exist, but are quite involved.
However, random $m\times n$ binary matrices with nonreplicative columns that have
$\lfloor \ell n\rfloor$ entries equal to $1$, perform numerically extremely well, even if $\ell$ is small, as shown in \cite{RIP-P-SMM-08}.
In \cite{HassibiIEEE, Petra2009} it is shown that perturbing the elements of
adjacency matrices of expander graphs with low expansion, can also improve performance.

\subsection{Preliminaries}

For simplicity, we will restrict on situations were the intersection lengths of projection rays corresponding
to each camera with each discretization cell are all equal. Thus, we can make the assumption that the
entries of $A$ are binary. It will be useful to denote the set of cells by $C = [n]$ and the set of rays by $R = [m]$.
The incidence relation between cells and rays is then given by  
\begin{equation} \label{eq:def-A}
(A)_{ij}=
\begin{cases} 1,& 
\quad \text{if $j$-th ray intersects $i$-th cell},\\
 0, & \quad \text{otherwise}, \end{cases}
\end{equation}
for all $i\in[m]$, $j\in[n]$. Thus, cells and rays correspond to columns and rows of $A$.

This gives the equivalent representation in terms of a \emph{bipartite graph} $G = (C,R;E)$ with left and right vertices $C$ and $R$, and edges $(c,r) \in E$ iff $(A)_{rc} = 1$. $G$ has \emph{constant left-degree} $\ell$ equal to the number of projecting directions. 

For any non-negative measurement matrix $A$ and the corresponding graph, the set
\[
\calN(S) = \{i\in[m] \colon A_{ij}>0,\, j\in S\}
\]
contains all neighbors of $S$. The same notation applies to neighbors of subsets $S \subset [m]$ of right nodes.
Further, we will call any non-negative matrix \emph{adjacency matrix}, based on the incidence relation of its non-zero entries.

If $A$ is the non-negative adjacency matrix of a bipartite graph with constant left degree $\ell$, the \textbf{perturbed matrix} $\tilde A$ is computed by uniformly perturbing the non-zero entries $A_{ij} > 0$ to obtain $\tilde A_{ij} \in [A_{ij}-\veps,A_{ij}+\veps]$, and by normalizing subsequently all column vectors of $\tilde A$. In practice, such perturbation can be implemented by discretizing the image by radial basis functions of unequal size or and choose their locations on an irregular grid.

\vspace{0.25cm}
The following class of graphs plays a key role in the present context and in the field of compressed sensing in general.
\begin{definition}\label{def:Expander}
  A $(\nu,\delta)$-unbalanced expander is a bipartite simple graph $G
  = (L,R;E)$ with constant left-degree $\ell$ such that for any $X \subset L$ with
  $|X| \leq \nu$, the set of neighbors $\calN(X) \subset R$ of $X$ has at least size
  $|\calN(X)| \geq \delta \ell |X|$.
\end{definition}

Recovery of a $k$-sparse nonnegative solution via an
$(\nu,\delta)$-unbalanced expander was derived in \cite{WangIEEE}.
It employs the smallest expansion constant $\delta$ with respect to other similar results 
in the literature.

\begin{theorem}\label{thm:wang} Let $A$ be the adjacency
matrix of a $(\nu,\delta)$-unbalanced expander 
and $1 \geq \delta>\frac{\sqrt{5}-1}{2}$.
Then for any $k$-sparse vector $x^*$ with $k\le \frac{\nu}{(1+\delta)}$, the
solution set $\{x \colon Ax=Ax^*,x \ge 0\}$ is a singleton.
\end{theorem}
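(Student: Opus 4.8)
The plan is to argue by contradiction: suppose $x, x' \ge 0$ both satisfy $Ax = Ax' = Ax^*$ but $x \ne x'$, and derive a violation of the expansion property. Set $z = x - x'$, so $Az = 0$ and $z \ne 0$. Write $S^+ = \{j : z_j > 0\}$ and $S^- = \{j : z_j < 0\}$; these are disjoint, and since $x, x'$ are both supported (in the worst case) on sets of size at most $k$ together with the true support, a careful bookkeeping is needed. The cleaner route, which I would follow, is the standard one for nonnegative expander recovery: first show that any nonzero $z$ in the kernel of $A$ with $x = x^* + z \ge 0$ must have $|\supp(z)| $ controlled, then exploit expansion on $S^+$ (or on $S^+ \cup S^-$) to find a ray that sees exactly one ``active'' cell, forcing a nonzero entry of $Az$ — a contradiction.

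Concretely, I would first bound the size of $T := \supp(x) \cup \supp(x^*)$. Since $x \ge 0$ and $Ax = Ax^*$, and here is where the hypothesis $k \le \nu/(1+\delta)$ enters: one shows that if $x$ were supported on more than $\nu$ cells one could already extract a contradiction from expansion, so effectively we may assume $|T| \le \nu$. Then apply Definition \ref{def:Expander} to $T$: $|\calN(T)| \ge \delta \ell |T|$. The key combinatorial step is a counting argument on the edges between $T$ and $\calN(T)$: there are exactly $\ell |T|$ such edges, so the average right-degree into $T$ is at most $\ell|T| / (\delta \ell |T|) = 1/\delta$. Since $1/\delta < 2$ (because $\delta > (\sqrt5-1)/2 > 1/2$), a positive fraction of the neighbors of $T$ have degree exactly $1$ into $T$ — these are the ``unique neighbors''. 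A quantitative version: the number of unique neighbors is at least $(2\delta - 1)\ell |T|$ by the usual expander unique-neighbor lemma.

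Now I would localize the contradiction. Let $j_0 \in T$ be the index maximizing $|z_{j_0}|$ (WLOG $z_{j_0} > 0$, i.e. $j_0 \in S^+$). Consider the unique neighbors of $S^+$: a ray $r$ that intersects exactly one cell of $S^+$ but may still intersect cells of $S^-$. For the equation $(Az)_r = 0$ to hold at such a ray, the single positive contribution $A_{r j_0'} z_{j_0'}$ must be cancelled by negative contributions from $S^-$. Summing $|(Az)_r| = 0$ over all unique neighbors of $S^+$ and using that each cell in $S^-$ has degree $\ell$, one obtains an inequality of the form $(\text{unique-neighbor count of } S^+)\cdot(\text{something}) \le \ell |S^-| \cdot(\text{something})$, and plugging the bound $|\calN(S^+)| \ge \delta \ell |S^+|$ together with $|S^+| + |S^-| \le |T| \le \nu$ yields $|S^+|(2\delta - 1) \le |S^-|(1-\delta)$ — and by symmetry the reverse — which combined with $\delta > (\sqrt5 - 1)/2$ (the golden-ratio conjugate, chosen precisely so that $2\delta - 1 > (1-\delta)^2/\delta$ or an equivalent quadratic inequality closes) forces $S^+ = S^- = \emptyset$, contradicting $z \ne 0$.

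The main obstacle I anticipate is the last step: making the cancellation estimate quantitatively tight enough that the bound $\delta > (\sqrt5-1)/2$ is exactly what is needed, rather than some cruder constant like $\delta > 2/3$. This requires handling the ``two-sided'' expansion carefully — applying the expander bound not just to $S^+$ and to $S^-$ separately but also to $S^+ \cup S^-$, and tracking how edges are shared — and it is here that the golden ratio emerges from a quadratic inequality in $\delta$. I would also need to be careful that the reduction ``$|T| \le \nu$'' is legitimate; the hypothesis $k \le \nu/(1+\delta)$ rather than $k \le \nu$ is presumably there precisely to absorb the extra factor coming from bounding $|\supp(x)|$ in terms of $k$ via the same unique-neighbor argument, so I would prove that auxiliary bound first and feed it into the main estimate.
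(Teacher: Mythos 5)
The paper does not prove Theorem~\ref{thm:wang}: it is imported verbatim from \cite{WangIEEE}, so your proposal can only be measured against that source, and measured that way it has genuine gaps. First, the reduction ``we may assume $|T|\le\nu$'' is not legitimate. The expansion property of Definition~\ref{def:Expander} constrains only sets of size at most $\nu$ and gives no handle on a competing solution $x$ with large support; no contradiction can be ``extracted from expansion'' for large sets. Relatedly, your ``by symmetry the reverse'' step fails: taking $z=x-x^*$, nonnegativity of $x$ forces $S^-\subseteq\supp(x^*)$, hence $|S^-|\le k$, but $S^+$ is a priori unbounded -- the two sign sets play completely asymmetric roles. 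Second, the mechanism that actually makes nonnegativity usable is missing. Because $A$ has constant column sums $\ell$, one has $\mathbf{1}^{\top}A=\ell\,\mathbf{1}^{\top}$, so every nonnegative solution satisfies $\|x\|_1=\|x^*\|_1$; with $S=\supp(x^*)$ this yields $\|z_{S^c}\|_1\le\|z_S\|_1$ for the difference of any two nonnegative solutions. Uniqueness then reduces to an $\ell_1$ null space property, $\|w_S\|_1<\|w_{S^c}\|_1$ for all nonzero $w$ with $Aw=0$ and all $|S|\le k$, which \cite{WangIEEE} derive by applying expansion to $S$ augmented by the $\delta k$ largest-magnitude coordinates of $w_{S^c}$ (this is exactly where the hypothesis $(1+\delta)k\le\nu$ is spent), combined with the unique-neighbor/RIP-1 estimate $\|Aw_T\|_1\ge(2\delta-1)\ell\|w_T\|_1$; the golden-ratio threshold falls out of the resulting quadratic inequality $\delta^2>1-\delta$.

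Your support-counting version of the cancellation step cannot close. A unique neighbor of $S^+$ being forced to also be a neighbor of $S^-$ only gives $(2\delta-1)\ell|S^+|\le|\calN(S^-)|\le\ell|S^-|$, i.e.\ $(2\delta-1)|S^+|\le|S^-|$, and the symmetric counterpart (even if it were available) yields $(2\delta-1)^2\le 1$, which is vacuous for all $\delta\le 1$. The inequality you posit, $|S^+|(2\delta-1)\le|S^-|(1-\delta)$, is both unproved and, as you concede, would at best give $\delta>2/3$. Cardinalities are the wrong currency: the cancellation at a unique neighbor constrains $\ell_1$ mass, and the whole argument must be run in $\|\cdot\|_1$.
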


Now let $A$ denote the tomographic projection matrix, and consider a subset $X \subset C$ of $|X|=k$ columns and a corresponding $k$-sparse vector $x$. Then $b = A x$ has support $\mc{N}(x)$, and we may remove the subset of $\mc{N}(X)^{c} = (\mc{N}(X))^{c}$ rows from the linear system $A x = b$ corresponding to $b_{r}=0,\, \forall r \in R$. Moreover, based on the observation $\mc{N}(X)$, we know that
\begin{equation} \label{eq:reduced-dimensions}
 X \subseteq \mc{N}(\mc{N}(X))
 \qquad\text{and}\qquad
 \mc{N}(\mc{N}(X)^{c}) \cap X = \emptyset.
\end{equation}

We continue by formalizing the system reduction just described.

\begin{definition}
The \emph{reduced system} corresponding to a given non-negative vector $b$,
\begin{equation} \label{eq:red-system}
 A_{red} x = b_{red},\qquad A_{red} \in 
 \R_{+}^{m_{red} \times n_{red}},
\end{equation}
results from $A, b$ by choosing the subsets of rows and columns
\begin{equation} \label{eq:def-RbCb}
 R_{b} := \supp(b),\qquad
 C_{b} := \mc{N}(R_{b}) \setminus \mc{N}(R_{b}^{c})
\end{equation}
with 
\begin{equation} \label{def:mn-red}
 m_{red} := |R_{b}|,\qquad
 n_{red} := |C_{b}|.
\end{equation}
\end{definition}
Note that for a vector $x$ and the bipartite graph induced by the measurement matrix $A$, we have the correspondence (cf.~\eqref{eq:reduced-dimensions})
\[
X = \supp(x),\qquad
R_{b} = \mc{N}(X),\qquad
C_{b} = \mc{N}(\mc{N}(X)) \setminus \mc{N}(\mc{N}(X)^{c}).
\]
We further define
\begin{equation}\label{def:feasSet}
\calS^+:=\{x \colon Ax=b, x\ge 0\}
\end{equation}
and
\begin{equation}\label{def:redfeasSet}
\calS_{red}^+:=\{x \colon A_{R_b C_b}x=b_{R_b}, x\ge 0\}\ .
\end{equation}
The following proposition asserts that solving the reduced system 
\eqref{eq:red-system} will always recover the support of the solution to the original system $A x  = b$
\begin{proposition}\cite[Prop. 5.1]{Petra-Schnoerr-12a}\label{prop:redfeasSet}
Let $A\in \R^{m\times n}$ and $b\in\R^m$ have nonnegative entries only, and let
$\calS^+$ and $\calS_{red}^+$ be defined by \eqref{def:feasSet} and
\eqref{def:redfeasSet}, respectively. Then
\begin{equation}\label{eq:feasSet}
  \calS^+=\{x\in \R^n \colon x_{(C_b)^c}=0\;\text{ and }\; x_{C_b}\in
  \calS_{red}^+\}.
\end{equation}
\end{proposition}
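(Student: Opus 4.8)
The plan is to prove the set identity \eqref{eq:feasSet} by showing the two inclusions separately, the whole argument being driven by the elementary but crucial observation that, because $A$ and $b$ are nonnegative, every equation indexed by a row $i$ with $b_i = 0$ forces all of the products $A_{ij}x_j$ to vanish individually. As a preliminary, writing $R_b = \supp(b)$ and $C_b = \calN(R_b)\setminus\calN(R_b^c)$ as in \eqref{eq:def-RbCb}, I would record the complement description $(C_b)^c = (\calN(R_b))^c \cup \calN(R_b^c)$ and note that, since the columns of $A$ are nonzero (constant left-degree $\ell \ge 1$), one has $(\calN(R_b))^c \subseteq \calN(R_b^c)$, hence $(C_b)^c = \calN(R_b^c)$; equivalently, $C_b$ is exactly the set of columns of $A$ whose support is contained in $R_b$.

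For the forward inclusion, let $x \in \calS^+$, so $Ax = b$ and $x \ge 0$. For any $i \in R_b^c$ we have $0 = b_i = \sum_{j} A_{ij} x_j$, a sum of nonnegative terms, whence $A_{ij}x_j = 0$ for every $j$ and in particular $x_j = 0$ whenever $A_{ij} > 0$. Ranging over $i \in R_b^c$, this yields $x_j = 0$ for all $j \in \calN(R_b^c) = (C_b)^c$, i.e.\ $x_{(C_b)^c} = 0$. Next, for $i \in R_b$ I split the row sum as $\sum_{j \in C_b} A_{ij}x_j + \sum_{j \in (C_b)^c} A_{ij}x_j$; the second sum vanishes by what was just shown, so $(A_{R_b C_b} x_{C_b})_i = (Ax)_i = b_i$. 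Hence $A_{R_b C_b} x_{C_b} = b_{R_b}$, and since $x_{C_b} \ge 0$ we conclude $x_{C_b} \in \calS_{red}^+$.

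For the reverse inclusion, let $x \in \R^n$ satisfy $x_{(C_b)^c} = 0$ and $x_{C_b} \in \calS_{red}^+$. Nonnegativity of $x$ is immediate from $x_{C_b}\ge 0$. To verify $Ax = b$ I treat the two groups of rows. For $i \in R_b$ the row sum collapses exactly as above to $(A_{R_b C_b} x_{C_b})_i = b_i$. For $i \in R_b^c$, every $j \in C_b$ has $A_{ij} = 0$ because $C_b \cap \calN(R_b^c) = \emptyset$, while every $j \in (C_b)^c$ has $x_j = 0$; therefore $(Ax)_i = 0 = b_i$. Combining the two cases gives $Ax = b$, so $x \in \calS^+$, and the identity \eqref{eq:feasSet} follows.

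There is essentially no analytic obstacle in this argument — it is a bookkeeping exercise with the neighborhood operator $\calN$ and its complements, leaning only on the sign constraints to decouple the nullified equations. The one point deserving care is the step $(\calN(R_b))^c \subseteq \calN(R_b^c)$, which uses that $A$ has no zero columns: in the present setting this is guaranteed by the constant left-degree $\ell$, but it should be stated, since an all-zero column would contribute a free nonnegative variable to $\calS^+$ that is not permitted by the right-hand side of \eqref{eq:feasSet}.
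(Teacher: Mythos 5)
Your proof is correct and is exactly the argument one expects here: the paper itself does not reprove this statement but cites it from \cite{Petra-Schnoerr-12a}, and the standard proof there is the same two-inclusion bookkeeping you give, driven by the observation that nonnegativity forces $A_{ij}x_j=0$ termwise in every row with $b_i=0$. Your side remark that $(C_b)^c=\calN(R_b^c)$ requires $A$ to have no zero columns is a legitimate and worthwhile precision, since the proposition as literally stated assumes only nonnegativity, while the identity \eqref{eq:feasSet} would fail for an all-zero column; in the paper's setting this is harmless because every column has support of size $\ell\ge 1$ by \eqref{eq:Axb-properties}.
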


Consequently, we can restrict the linear system $A x = b$ to the subset of columns
$\mc{N}(\mc{N}(X)) \setminus \mc{N}(\mc{N}(X)^{c}) \subset C$, and only consider properties of 
this reduced systems. 

\subsection{Guaranteed Uniqueness}
\label{sec:uniqueness}

Uniqueness of $x\in\Rn_{\delta k,+}$ is guaranteed if all $k$ or less-sparse
supported on ${\rm supp}(x)$ induce overdetermined reduced systems
with $m_{red}/n_{red}> \delta \ell \ge  \frac{\sqrt{5}-1}{2}\ell$.

\begin{proposition}\cite[Th. 3.4]{Petra-Schnoerr-12a}\label{thm:Wang}  Let $A$ be the adjacency
matrix of a bipartite graph such that for all
random subsets $X \subset C$ of $|X| \leq k$ left nodes, the set of neighbors $\calN(X)$ of $X$ satisfies
\begin{equation} \label{eq:condition-Wang}
|\calN(X)| \geq \delta 
\ell |\calN(\calN(X))\setminus \calN(\calN(X)^c)| 
\qquad\text{with}\qquad 
\delta>\frac{\sqrt{5}-1}{2}.
\end{equation}
Then, for any $\delta k$-sparse nonnegative vector $x^*$, the
solution set $\{x \colon  Ax=Ax^*,x \ge 0\}$ is a singleton.
\end{proposition}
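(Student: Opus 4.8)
The plan is to reduce Proposition \ref{thm:Wang} to the already-established Theorem \ref{thm:wang} (the Wang recovery theorem) by passing to the reduced system. The key observation is that condition \eqref{eq:condition-Wang} is precisely an expansion condition on the reduced graph: for a fixed support-candidate set $X$ of at most $k$ left nodes, the quantity $|\calN(\calN(X))\setminus\calN(\calN(X)^c)|$ is exactly $n_{red}$ for $b = Ax^*$ with $\supp(x^*)\subseteq X$ (via the dictionary $C_b = \calN(\calN(X))\setminus\calN(\calN(X)^c)$ recorded just before the statement), while $\calN(X) = R_b$ has size $m_{red}$. Hence \eqref{eq:condition-Wang} reads $m_{red}\ge \delta\ell\, n_{red}$, i.e. the reduced bipartite graph $G_{red}$ on vertex classes $C_b$ (left) and $R_b$ (right) is a $(\nu,\delta)$-unbalanced expander with $\nu = n_{red}$: since every subset of $C_b$ of size at most $n_{red}$ is contained in $C_b$ itself, and by \eqref{eq:reduced-dimensions} all neighbors of $C_b$ lie in $R_b$, we get $|\calN(X')| \ge \delta\ell |X'|$ for all $X'\subseteq C_b$ is implied — here I must be slightly careful and will instead argue directly with $X'=C_b$, which is the only constraint Theorem \ref{thm:wang} needs checked against the relevant sparsity level.

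Concretely, I would proceed as follows. First, fix an arbitrary $\delta k$-sparse nonnegative $x^*$, set $b := Ax^*$, $X := \supp(x^*)$, so $|X|\le \delta k \le k$, and form the reduced system $A_{R_b C_b} x = b_{R_b}$ with dimensions $m_{red} = |R_b|$, $n_{red} = |C_b|$. By Proposition \ref{prop:redfeasSet}, $\calS^+$ is a singleton if and only if $\calS_{red}^+$ is, so it suffices to show the reduced feasible set is a singleton. Second, I apply hypothesis \eqref{eq:condition-Wang} with the given $X$: it yields $m_{red} = |\calN(X)| \ge \delta\ell\, n_{red}$. Third — and this is where one has to be attentive — I need an expansion statement for \emph{all} subsets $Y\subseteq C_b$ with $|Y|\le \nu$ for a suitable $\nu$, not just for $Y = C_b$, in order to invoke Theorem \ref{thm:wang} on $A_{R_b C_b}$. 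The natural choice is $\nu := n_{red}$. Then the only subsets $Y\subseteq C_b$ with $|Y|\le\nu$ are all of $C_b$'s subsets, and for each such $Y$ one applies \eqref{eq:condition-Wang} to $Y$ in place of $X$ (legitimate, since $|Y|\le n_{red}$, but one must check $n_{red}\le k$, or alternatively that the hypothesis is stated for all $X$ of size $\le k$ — if $n_{red}$ can exceed $k$ the argument needs the monotonicity $|\calN(\calN(Y))\setminus\calN(\calN(Y)^c)|\le n_{red}$ together with $\calN(Y)\subseteq R_b$, giving $|\calN(Y)| \ge \delta\ell|Y|$ still). Fourth, having verified $G_{red}$ is a $(\nu,\delta)$-unbalanced expander with left-degree $\le \ell$ (the reduced left-degrees can only drop, but the expansion bound $\delta\ell$ still controls them since only upper bounds on neighbor sizes matter in the other direction — actually Theorem \ref{thm:wang} wants the lower bound $\delta\ell|X|$, which is what we have), I apply Theorem \ref{thm:wang} with sparsity level $\nu/(1+\delta) = n_{red}/(1+\delta)$. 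Since $x^*$ restricted to $C_b$ is supported on $C_b$, hence $|C_b|$-sparse — but I need it $n_{red}/(1+\delta)$-sparse. Here is the crux: I instead note $|\supp(x^*_{C_b})| = |X| \le \delta k$ and I must relate $\delta k$ to $n_{red}/(1+\delta)$. The bookkeeping that makes this work is exactly \eqref{eq:condition-Wang}: from $m_{red}\ge\delta\ell n_{red}$ and the trivial $m_{red}\le \ell|X|$ (each of the $|X|$ active cells meets at most $\ell$ rays), we get $n_{red}\le |X|/\delta \le k$, and then $\delta k \ge \delta n_{red} \ge$ hmm — one wants $\delta k \le n_{red}/(1+\delta)$; combining $n_{red}\ge \delta k\cdot$(something) requires care, and this inequality-chasing is the step I expect to be fiddly.

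The main obstacle, then, is not conceptual but is the precise matching of sparsity thresholds: verifying that the $\delta k$-sparse vector $x^*$ becomes, after reduction, a vector whose sparsity $|X|$ satisfies $|X|\le \nu/(1+\delta)$ with $\nu = n_{red}$ (or whatever $\nu$ one picks), using only the structural inequalities \eqref{eq:reduced-dimensions}, the degree bound $m_{red}\le \ell|X|$, and the expansion hypothesis \eqref{eq:condition-Wang}. I would organize this as a short lemma: \emph{under \eqref{eq:condition-Wang}, $|X| = \delta k$ implies $n_{red} \le (1+\delta)|X|$}, or more simply derive $n_{red} \le |X|/\delta$ and observe $|X| = \delta k \le k/(1+\delta)\cdot(1+\delta)\delta$ — the cleanest route is probably to set $\nu := (1+\delta)\delta k$ and show both (a) every $Y\subseteq C_b$ with $|Y|\le\nu$ satisfies $|\calN(Y)|\ge\delta\ell|Y|$ (this uses \eqref{eq:condition-Wang} plus $\calN(Y)\subseteq R_b = \calN(X)$ and the neighbor monotonicity), and (b) $|X| = \delta k = \nu/(1+\delta)$, so Theorem \ref{thm:wang} applies verbatim to $A_{R_b C_b}$ and $x^*_{C_b}$, giving $\calS_{red}^+ = \{x^*_{C_b}\}$. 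Then Proposition \ref{prop:redfeasSet} closes the argument: $\calS^+ = \{x^*\}$. Everything else is routine substitution; the art is in choosing $\nu$ so that (a) and (b) hold simultaneously, and I'd spend most of the write-up pinning that down.
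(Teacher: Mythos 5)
The paper does not prove this proposition here --- it is imported by citation from \cite{Petra-Schnoerr-12a} --- so there is no in-paper argument to compare against line by line. That said, your strategy (pass to the reduced system via Proposition \ref{prop:redfeasSet}, show the reduced bipartite graph is an unbalanced expander, and invoke Theorem \ref{thm:wang}) is the natural and almost certainly the intended route, and the pieces you assembled do in fact suffice. The step you leave hanging closes as follows. Apply \eqref{eq:condition-Wang} to $X=\supp(x^*)$, $|X|\le\delta k\le k$, and combine with the trivial degree bound $m_{red}=|\calN(X)|\le\ell|X|$ to get $n_{red}\le|X|/\delta\le k$ (you derived this). Consequently \emph{every} subset $Y\subseteq C_b$ has $|Y|\le n_{red}\le k$, so the hypothesis applies to $Y$; since by \eqref{eq:reduced-dimensions} $Y\subseteq\calN(\calN(Y))\setminus\calN(\calN(Y)^c)$, this gives $|\calN(Y)|\ge\delta\ell\,|\calN(\calN(Y))\setminus\calN(\calN(Y)^c)|\ge\delta\ell|Y|$. (Note the justification is this \emph{containment}, not the ``monotonicity $|\calN(\calN(Y))\setminus\calN(\calN(Y)^c)|\le n_{red}$'' you invoke, which points the wrong way.) Also, $\calN(Y)\subseteq R_b$ and the reduced graph keeps constant left-degree $\ell$ exactly because $C_b$ excludes $\calN(R_b^c)$. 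Hence the reduced graph satisfies the expansion bound on \emph{all} of its left subsets, i.e.\ it is a $(\nu,\delta)$-unbalanced expander for every $\nu$, in particular for $\nu=(1+\delta)\delta k$; Theorem \ref{thm:wang} then covers sparsity up to $\nu/(1+\delta)=\delta k$, which is exactly $|\supp(x^*_{C_b})|$, and Proposition \ref{prop:redfeasSet} lifts uniqueness back to $\calS^+$.

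The inequality $\delta k\le n_{red}/(1+\delta)$ that you spend effort chasing is a red herring: it would only be needed if you insisted on $\nu=n_{red}$, and it is generally false. Once you notice that the expander condition for $\nu$ larger than $|C_b|$ is vacuous beyond the existing subsets, the choice $\nu=(1+\delta)\delta k$ costs nothing. With that one clarification your proposal is a complete and correct proof.
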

For perturbed matrices uniqueness is guaranteed for square reduced systems, and thus
less high sparsity values.
\begin{proposition}\cite[Th. 3.4]{Petra-Schnoerr-12a}\label{thm:SPCS1}  Let $A$ be the adjacency
matrix of a bipartite graph such that for all
subsets $X \subset C$ of $|X| \leq k$ left nodes, the set of neighbors $\calN(X)$ of $X$ satisfies
\begin{equation} \label{eq:Hassibi-condition}
|\calN(X)| \geq \delta 
\ell |\calN(\calN(X))\setminus \calN(\calN(X)^c)| 
\qquad\text{with}\qquad 
\delta > \frac{1}{\ell}.
\end{equation}
Then, for any $k$-sparse vector $x^*$, there exists a perturbation $\tilde A$ of $A$ such that the
solution set $\{x \colon \tilde Ax=\tilde Ax^*,x \ge 0\}$ is a singleton.
\end{proposition}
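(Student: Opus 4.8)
The plan is to fix the support $X=\supp(x^\ast)$, pass to the reduced system of Proposition~\ref{prop:redfeasSet}, and show that a generic perturbation $\tilde A$ makes the reduced coefficient matrix injective, which pins the nonnegative solution down to $x^\ast$. Since the perturbed and renormalized matrix $\tilde A$ is again nonnegative, Proposition~\ref{prop:redfeasSet} applies to it: with $R_b=\calN(X)$ and $C_b=\calN(\calN(X))\setminus\calN(\calN(X)^c)$, the claim reduces to showing that $x^\ast_{C_b}$ is the only nonnegative solution of the reduced equations $\tilde A_{R_bC_b}x=(\tilde Ax^\ast)_{R_b}$. As $x^\ast_{C_b}$ is one such solution, this holds as soon as $\tilde A_{R_bC_b}$ is injective, for then the whole affine solution set is a single point. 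Injectivity requires full column rank $n_{red}$; here the hypothesis \eqref{eq:Hassibi-condition} applied to $X$ gives $m_{red}=|\calN(X)|\ge\delta\ell\,n_{red}>n_{red}$ because $\delta\ell>1$, so the reduced matrix is at least square and injectivity is not obstructed by the dimension count.

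For the genericity step, fix $X$. Before the final column normalization the entries of $\tilde A_{R_bC_b}$ at its prescribed nonzero pattern are independent parameters ranging over an open box around the corresponding entries of $A$, and the normalization merely rescales columns by positive factors, leaving the rank unchanged. Every maximal minor of $\tilde A_{R_bC_b}$ is a polynomial in these parameters which is not identically zero exactly when the bipartite graph between $C_b$ and $R_b$ admits a matching saturating $C_b$ (the standard fact that the generic rank of a matrix equals the maximum bipartite matching in its zero pattern), since the monomial carried by such a matching cannot cancel; and a polynomial that is not identically zero vanishes only on a Lebesgue-null subset of the box. Hence for almost every perturbation $\tilde A_{R_bC_b}$ has full column rank, for the given $X$. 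As there are at most $\binom{n}{k}$ supports $X$, the union of the corresponding exceptional null sets is still null, so a.e.\ perturbation $\tilde A$ in fact works simultaneously for every $k$-sparse $x^\ast$.

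What remains — and this is the heart of the matter, where the threshold $\delta>1/\ell$ is consumed — is to check Hall's condition $|\calN(Y)|\ge|Y|$ for every $Y\subseteq C_b$, guaranteeing the saturating matching. First, $\calN(Y)\subseteq R_b$ whenever $Y\subseteq C_b$, because every cell of $C_b$ has all $\ell$ of its rays in $R_b$, so the neighbourhoods relevant for Hall's theorem coincide with the ordinary ones. For $|Y|\le k$, \eqref{eq:reduced-dimensions} gives $Y\subseteq\calN(\calN(Y))\setminus\calN(\calN(Y)^c)$, and \eqref{eq:Hassibi-condition} applied to $Y$ then yields $|\calN(Y)|\ge\delta\ell\,|\calN(\calN(Y))\setminus\calN(\calN(Y)^c)|\ge\delta\ell\,|Y|>|Y|$. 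The remaining range $|Y|>k$ — nonvacuous precisely because $n_{red}$ may exceed $k$, the ghost-particle effect — is the expected obstacle: here one leaves the bare expansion hypothesis and brings in the imaging geometry, namely that two distinct cells share at most one projecting ray, so that the cells form a linear hypergraph on the rays. Combining the resulting double-counting bound on ray-degrees with $|\calN(Y)|\ge\delta\ell k$ (from any $k$-element subset of $Y$) and the a priori estimate $n_{red}\le k/\delta$ (which follows from $m_{red}\le\ell k$) should close the gap; turning this into a clean quantitative argument is where the real work lies.
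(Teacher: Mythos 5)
Your overall strategy --- pass to the reduced system via Proposition~\ref{prop:redfeasSet}, observe that \eqref{eq:Hassibi-condition} with $\delta\ell>1$ makes it overdetermined, and argue that a generic perturbation gives $\tilde A_{R_bC_b}$ full column rank because the generic rank of a structured matrix equals the maximum matching of its zero pattern --- is the right one, and it is essentially the mechanism behind the cited result (which rests on the perturbation/full-rank lemmas of Khajehnejad et al.~\cite{HassibiIEEE}). The reduction itself, the measure-zero union bound over supports, and the verification of Hall's condition for subsets $Y\subseteq C_b$ with $|Y|\le k$ are all correct.

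However, the proof is incomplete at precisely the decisive step, and you say so yourself: Hall's condition for subsets $Y\subseteq C_b$ with $k<|Y|\le n_{red}$ is never established. This is not a routine detail. The whole point of the reduction is that $C_b=\calN(\calN(X))\setminus\calN(\calN(X)^c)$ strictly contains $\supp(x^*)$ in the regime of interest (the ghost-particle cells), and when $\delta$ is close to $1/\ell$ one only gets $n_{red}\le k/\delta$, which can be as large as roughly $\ell k$. The hypothesis \eqref{eq:Hassibi-condition} is quantified only over sets of size at most $k$, so for larger $Y$ the best it yields (via a $k$-element subset) is $|\calN(Y)|\ge\delta\ell k$, whereas $|Y|$ can reach $k/\delta$; matching these forces $\delta\ge 1/\sqrt{\ell}$, strictly stronger than the assumed $\delta>1/\ell$. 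So the quantitative sketch you offer does not close the gap, and the auxiliary fact you propose to import --- that two distinct cells share at most one ray, making the cells a linear hypergraph on the rays --- is a property of the tomographic geometry that is not among the hypotheses of the proposition as stated, which concerns an arbitrary left-regular bipartite adjacency matrix. Until the saturating matching on all of $C_b$ (or some substitute for full column rank of $\tilde A_{R_bC_b}$, e.g.\ a null-space argument showing every nonzero kernel vector of $\tilde A$ has more than $k$ negative entries) is actually proved, the argument does not establish the proposition.
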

Recovery via perturbed underdetermined reduced systems is possible and our numerical results
from Section \ref{sec:num} suggest the following.
\begin{conjecture}\label{conj:SPCS2}  Let $A$ be the adjacency
matrix of a bipartite graph such that for all
subsets $X \subset C$ of $|X| \leq k$ left nodes, the set of neighbors $\calN(X)$ of $X$ satisfies
\begin{equation} \label{eq:inv-condition}
|\calN(X)| \geq \frac{1+\delta}{\ell} 
|\calN(\calN(X))\setminus \calN(\calN(X)^c)| 
\qquad\text{with}\qquad 
\delta >\frac{\sqrt{5}-1}{2}.
\end{equation}
Then, for any $\frac{k}{\ell}$-sparse vector $x^*$, there exists a perturbation $\tilde A$ of $A$ such that the
solution set $\{x \colon \tilde Ax=\tilde Ax^*,x \ge 0\}$ is a singleton.
\end{conjecture}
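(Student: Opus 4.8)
\emph{Towards a proof (the statement is conjectural; what follows is a proposed route).} Abbreviate $\tau:=\tfrac{\sqrt5-1}{2}$, so that $\delta>\tau$ is equivalent to $\delta^2+\delta-1>0$. Fix $x^*$ with $S:=\supp(x^*)$, $|S|\le k/\ell$. By Proposition~\ref{prop:redfeasSet} it suffices to exhibit \emph{one} perturbation $\tilde A$ for which the reduced system $\tilde A_{R_bC_b}x=\tilde A_{R_bC_b}x^*_{C_b}$ has a unique nonnegative solution, where $R_b=\calN(S)$ and $C_b=\calN(\calN(S))\setminus\calN(\calN(S)^c)$. Since the left-degree is $\ell$ we have $m_{red}=|R_b|=|\calN(S)|\le k$, so hypothesis \eqref{eq:inv-condition} may be invoked at $X=S$ (and at every $X\subseteq C_b$ with $|X|\le k$), yielding the two-hop bound $n_{red}=|C_b|\le\tfrac{\ell}{1+\delta}\,m_{red}$. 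In the regime of interest $m_{red}<n_{red}$, so the reduced system is underdetermined and uniqueness cannot follow from a rank argument alone as in Proposition~\ref{thm:SPCS1}; it must come from nonnegativity together with the expander structure. Because the nonzero entries of $x^*$ are strictly positive, the reduced solution is unique if and only if there is no $w\in\ker\tilde A_{R_bC_b}\setminus\{0\}$ with $w_i\ge0$ for all $i\in C_b\setminus S$. For such a putative ``bad'' direction write $P:=\{i:w_i>0\}$ and $N:=\{i:w_i<0\}\subseteq S$; then $N\ne\emptyset$ (otherwise $\tilde A_{R_bC_b}w=\tilde A_{R_bP}w_P\ne0$), $|N|\le|S|\le k/\ell$, while $P$ may be nearly all of $C_b$, and the row-$r$ equation reads $\sum_{i\in P}\tilde A_{ri}w_i=\sum_{i\in N}\tilde A_{ri}(-w_i)$ for every $r\in R_b$.

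The heart of the argument is a \emph{perturbed} version of the unique-neighbour/expansion analysis behind Theorem~\ref{thm:wang} and Proposition~\ref{thm:Wang}. One classifies the rows of $R_b$ by the number of cells of $N$ and of $P$ they meet: on a row adjacent to exactly one cell $j\in N$ and to no cell of $P$, the displayed identity collapses to $\tilde A_{rj}w_j=0$, which is impossible; hence a bad $w$ can exist only if nearly every neighbour of $N$ is also touched (``blocked'') by $P$. The two-hop budget $n_{red}\le\tfrac{\ell}{1+\delta}m_{red}$ caps how many rows $P$ can block (it bounds the $P$-to-$R_b$ edge count), and balancing the blocking mass produced by $P$ against the collision mass inside $N$ is where $\delta>\tau$, i.e.\ $\delta^2+\delta>1$, enters as the break-even threshold, exactly as in the golden-ratio bookkeeping of Theorem~\ref{thm:wang}. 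This purely combinatorial count does not quite close — a dense $P$ can, combinatorially, cover all $\ell$ rays of a singleton $N$ — so the residual slack is to be absorbed by the perturbation: for each fixed pair $(N,P)$, the existence of a bad $w$ supported on $N\cup P$ forces a nontrivial algebraic relation among the perturbation parameters $\{\veps_{ri}\}_{(r,i)\in E}$, with the combinatorial count above guaranteeing that this relation is not an identity. Since there are only finitely many admissible pairs $(N,P)$ (at most $2^{n}$), the union of the corresponding exceptional sets is Lebesgue-null inside the parameter box $[-\veps,\veps]^{|E|}$, so a perturbation $\tilde A$ avoiding all of them — hence giving uniqueness for the fixed $S$, and then, after a further finite union over $S$ with $|S|\le k/\ell$, for all $x^*$ — exists. (Column normalization only conjugates $\ker\tilde A_{R_bC_b}$ by a positive diagonal and preserves the sign condition, so it can be ignored.)

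The decisive and still-open step is the claim in the previous paragraph that, for every combinatorially surviving pair $(N,P)$, the bad-$w$ relation in the perturbation parameters is genuinely not an identity; equivalently, that a kernel subspace of dimension $n_{red}-m_{red}$ arising from a perturbed expander with the weak parameters of \eqref{eq:inv-condition} can be made to miss the polyhedral cone $\{w:w_{C_b\setminus S}\ge0\}$ once the sparsity has been reduced by the factor $1/\ell$. This is a statement about a structured, near-generic subspace avoiding a polyhedral cone, and — as the Wendel and Donoho--Tanner theory already signals — such statements are delicate and are not settled by general position alone; proving it (for instance by constructing, for each surviving $(N,P)$, an explicit perturbation that annihilates every bad $w$, or by a counting bound that trades the $1/\ell$ sparsity gain directly against the $1+\delta$ expansion deficit) is precisely what would upgrade Conjecture~\ref{conj:SPCS2} to a theorem. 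I expect this to be the main obstacle; the reduction, the uniqueness criterion, and the measure-theoretic packaging are routine.
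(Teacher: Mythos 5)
First, a point of reference: the paper does not prove this statement. It is stated explicitly as a conjecture, and the only support offered is empirical --- the $k_{1/\delta}$ curves derived from criterion \eqref{eq:kinv-criterion} are shown in Section~\ref{sec:num} to track the observed phase transition for perturbed systems. So there is no proof in the paper to compare yours against. Your preliminary reductions are sound and consistent with the paper's framework: the passage to the reduced system via Proposition~\ref{prop:redfeasSet}, the bound $m_{red}\le \ell|S|\le k$ and the two-hop estimate $n_{red}\le\frac{\ell}{1+\delta}m_{red}$ from \eqref{eq:inv-condition}, the kernel/sign-cone criterion for uniqueness of a nonnegative solution, and the observation that column normalization is harmless. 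You also correctly identify that the reduced system is underdetermined here, so no rank argument in the style of Proposition~\ref{thm:SPCS1} can apply, and that the golden-ratio threshold must enter through a Wang--Xu--Tang-type unique-neighbour count as in Theorem~\ref{thm:wang}.

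The concrete flaw is in the middle paragraph, where you assert that for each surviving pair $(N,P)$ the existence of a bad kernel vector ``forces a nontrivial algebraic relation among the perturbation parameters'' so that the exceptional set is Lebesgue-null. That is not true in general, and it is the crux of why this remains a conjecture. The condition ``$\ker\tilde A_{R_bC_b}$ meets the open cone $\{w\colon w_N<0,\ w_P>0,\ w_{(N\cup P)^c}=0\}$'' is an \emph{open} condition on $\tilde A$ whenever the kernel has positive dimension (here $n_{red}-m_{red}>0$): if some perturbation admits such a $w$, so does every nearby perturbation, and the bad set has positive measure rather than being contained in a hypersurface. Avoiding a full-dimensional polyhedral cone is not a codimension-one (general position) event, which is exactly why the Wendel/Donoho--Tanner machinery, and any ``generic perturbation'' argument, cannot close the gap --- a point the paper itself underscores by noting that $k_{1/\delta}$ can exceed the centrosymmetric bound $k_{opt}$. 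Your final paragraph concedes this, but it contradicts the ``routine measure-theoretic packaging'' claim two paragraphs earlier; as written, the argument would need either the combinatorial count to eliminate every surviving $(N,P)$ outright (which you note it does not), or an explicit construction of a perturbation that separates the kernel from the cone. Neither is supplied, so the proposal is a reasonable research plan but not a proof --- which is the honest status of the statement in the paper as well.
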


The consequences of Propositions \ref{thm:Wang}, \ref{thm:SPCS1} and Conjecture \ref{conj:SPCS2} are 
investigated in the following sections \ref{sec:hexagon_red_dim} and \ref{sec:cube_red_dim}
by working out critical values of the sparsity parameter $k$ for which the respective conditions are satisfied with high probability.

\section{3 Cameras - Left Degree equals 3}\label{sec:Hex3C}

In this section, we analyze the imaging set-up depicted in Figure \ref{fig:3_cam}, left panel, which also represents typical 3D scenarios encountered in practice with a coarse resolution only along the third coordinate, as shown by Figure \ref{fig:3_cam}, center panel.
\begin{figure}
\centerline{
\includegraphics[width=0.35\textwidth]{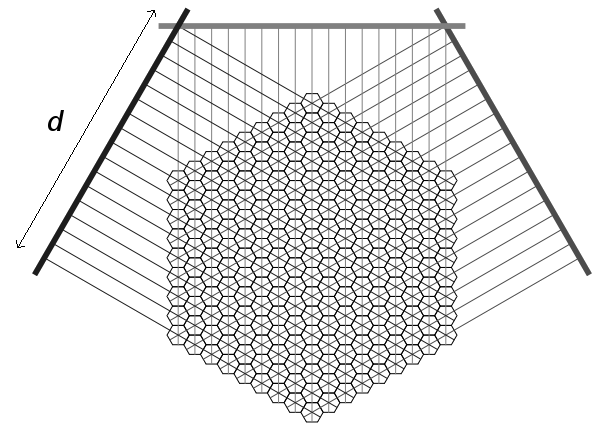}
\includegraphics[width=0.3\textwidth]{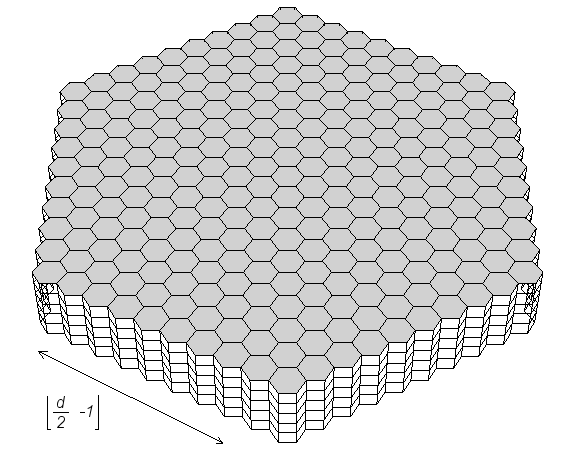}
\includegraphics[width=0.3\textwidth]{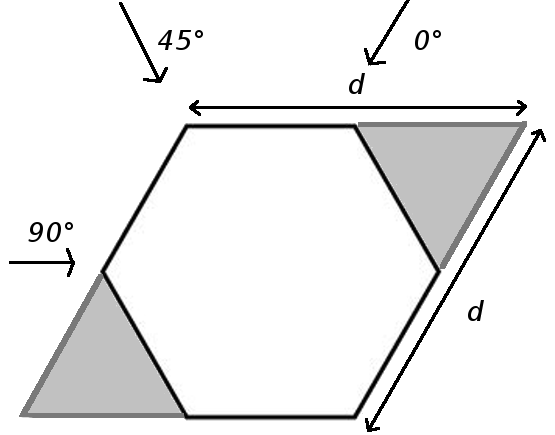}
}
\caption{Sketch of a 3-cameras setup in 2D. Left: The hexagonal area discretized in 
$3\frac{d^2+1}{4}$ equally sized cells projected on 3 1D cameras. 
The resulting projection matrix $A\in\{0, 1\}^{m\times n}$ is underdetermined,
with $m=3d$ and $n=3\frac{d^2+1}{4}$, where $\frac{d-1}{2}+1$ is the number of cells 
on each hexagon edge. Middle: This geometry can be easily extended to 3D by enhancing 
both cameras and volume by one dimension, thus representing scenarios of practical relevance 
when cameras are aligned on a line. Right: When considering a square area along with three 
projection directions (two orthogonal, one diagonal) one obtains a projection matrix with 
analogous reconstruction properties. The projection matrix \emph{equals} up to scaling the previous projection matrix corresponding to the hexagonal area if we remove the $2\cdot \frac{d^2-1}{8}$ cells in the marked corners along with incident rays.}
\label{fig:3_cam}
\end{figure}

\subsection{Imaging Geometry}\label{sec:hexagon_geom}

Cell centers $x_{c}$ of hexagonal cells $c \in C$ that partition a region of interest, are given by lattice points 
corresponding to integer linear combinations of two vectors $d^{i},\,i=1,2$,
\begin{gather}
 x_{c} = i_{1} d^{1} + i_{2} d^{2},\qquad
 d^{1} = \frac{1}{2} \bpm \sqrt{3} \\ 1 \epm,\quad
 d^{2} = \bpm 0 \\ 1 \epm,\quad
 (i_{1},i_{2}) \in \mc{I},
\end{gather}
for the index set
\begin{equation}
  \mc{I} = \big\{(i,j) \colon 
  -(d-1)/2 \leq i,j \leq (d-1)/2,\; |i+j| \leq (d-1)/2 
  \big\},
\end{equation}
with problem size $d \in \N$ that we assume (in this section) to be an odd number for simplicity.
The number $|R|$ of projections $r \in R = R_{1} \cup R_{2} \cup R_{3}$, where $R_{i},\,i=1,2,3$, corresponds to the rays of direction $i$, is
\begin{equation} \label{eq:def-NR}
 |R| = 3 |R_{i}| = 3 d.
\end{equation}
The number of cells incident with projection rays ranges over the interval
\begin{equation}
 \{(d+1)/2, (d+1)/2+1, \dotsc, d\}
\end{equation}
from the periphery towards the center. Thus, indexing with $r$ each projection ray along any particular direction $R_{i},\,i=1,2,3$, from one side of the hexagon across the center towards the opposite side, the numbers of cells incident with ray $r$ is
\begin{equation} \label{eq:def-Nc}
 |r| \in \{(d+1)/2,\dotsc, d,\dotsc, (d+1)/2\},\qquad
 r \in R_{i},\; i=1,2,3.
\end{equation}
The total number of cells is
\begin{equation} \label{eq:def-NC}
 |C| = \sum_{r \in R_{i}} |r|
 = 2 \sum_{j=(d+1)/2}^{d-1} j + d 
 = \frac{1}{4}(3 d^{2}+1),\quad i \in \{1,2,3\},
\end{equation}
and
\begin{equation} \label{eq:sum-NR-NC}
 \sum_{r \in R} |r| = 3 |C|.
\end{equation}
Accordingly, the system of equations representing the imaging geometry depicted by Figure \ref{fig:3_cam}, left panel, has dimensions
\begin{equation} \label{eq:def-Ab}
 A x = b,\qquad\qquad A \in \{0,1\}^{|R| \times |C|},\qquad
 b \in \R^{|R|}.
\end{equation}
Note that $|R| \ll |C|$.
For further reference, we define the quantities
\begin{subequations} \label{eq:def-pqr}
\begin{align}
 q_{r} &= \frac{|r|}{|C|}, &
 p_{r} &= 1-q_{r}, \\
 \ul{q}_{d} &= \min_{r \in R} q_{r}, &
 \ol{q}_{d} &= \max_{r \in R} q_{r}, \\
  \ul{p}_{d} &= 1-\ol{q}_{d}, &
 \ol{p}_{d} &= 1-\ul{q}_{d},
\end{align}
\end{subequations}
and list some further relations and approximations for large $d$,
\begin{subequations} \label{eq:basic-approximations}
\begin{align}
 \ul{q}_{d} &= \frac{2(d+1)}{3 d^{2}+1} 
 \approx \frac{2}{3 d}, &
 \ol{q}_{d} &= \frac{1}{3} \frac{|R|}{|C|} 
 \approx \frac{4}{3 d}, \\
 |R| \ul{q}_{d} &\approx 2, &
 |R| \ol{q}_{d} &\approx 4.
\end{align}
\end{subequations}

\subsection{Dimensions of Reduced Systems}
\label{sec:hexagon_red_dim}

We estimate the \emph{expected} dimensions \eqref{def:mn-red} of the reduced system \eqref{eq:red-system} based on
uniformly selecting $k$ cells at random locations. 

To each projection ray $r \in R$, we associate a binary random variable $X_{r}$ taking the value $X_{r}=1$ if \emph{not any} of the $k$ cells is incident with ray $r$, and $X_{r}=0$ otherwise. We call the event $X_{r}=1$ \emph{zero-measurement}.

We are interested in the random variable
\begin{equation} \label{eq:def-X}
 X = \sum_{r \in R} X_{r}
\end{equation}
that determines the number of projection rays not incident with any of the $k$ cells, that is the number of zero measurements. We set
\begin{equation} \label{eq:def-NR0}
 N_{R}^{0} := \EE[X],\qquad
 N_{R} := |R| - N_{R}^{0}.
\end{equation}
Hence, $N_{R}$ is the \emph{expected} size of the support $m_{red} = |\supp(b)|$ of the measurement vector $b$.
\begin{remark} \label{rem:Xr-dependence}
Note that random variables $X_{r}$ are \emph{not} independent because different projection rays may intersect. This dependency does not affect the expected value of $X$, but it does affect the deviation of observed values of $X$ from its expected value -- cf.~Section \ref{sec:hexagon-tailbound}.
\end{remark}
\begin{remark} \label{rem:ass-replace}
We do \emph{not} assume in the derivation below that $k$ \emph{different} cells are selected. 
In fact, a single cell may be occupied by more than a single particle in practice, because real particles are very small relative to the discretization cells $c$. The imaging optics enlarges the appearance of particles, and the action of physical projection rays is adequately represented by linear superposition.
\end{remark}
\begin{definition}[\textbf{Sparsity Parameter}] \label{def:k}
 We refer to the number $k$ introduced above as sparsity parameter. Thus, highly sparse scenarios correspond to low values $k$.
\end{definition}
\begin{lemma} \label{lem:NR0}
 The expected number $N_{R}^{0}$ of zero measurements is
\begin{equation} \label{eq:NR0}
 N_{R}^{0} = N_{R}^{0}(k) = \EE[X] = \sum_{r \in R} p_{r}^{k}.
\end{equation}
\end{lemma}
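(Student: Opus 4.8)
The plan is a direct linearity-of-expectation argument. First I would fix a single ray $r \in R$ and compute $\Pr(X_{r} = 1)$, the probability of a zero-measurement at $r$. Recall the sampling model from Remark~\ref{rem:ass-replace}: the $k$ cells are drawn independently and uniformly at random from $C$ (with replacement, so a cell may be selected more than once). The event $X_{r} = 1$ means that none of these $k$ draws lands in the set of the $|r|$ cells incident with ray $r$. A single draw avoids that set with probability $1 - |r|/|C| = 1 - q_{r} = p_{r}$ by the definitions in \eqref{eq:def-pqr}, and because the $k$ draws are independent, the probability that all of them avoid it is the product $p_{r}^{k}$. Since $X_{r}$ is a $\{0,1\}$-valued indicator, $\EE[X_{r}] = \Pr(X_{r} = 1) = p_{r}^{k}$.

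Then I would sum over the rays. By linearity of expectation applied to $X = \sum_{r \in R} X_{r}$ from \eqref{eq:def-X},
\[
 N_{R}^{0} = \EE[X] = \sum_{r \in R} \EE[X_{r}] = \sum_{r \in R} p_{r}^{k},
\]
which is exactly \eqref{eq:NR0}. Note that linearity of expectation needs no independence among the $X_{r}$, so the fact that distinct projection rays may intersect (Remark~\ref{rem:Xr-dependence}) is irrelevant at this stage; that dependency enters only the variance and tail-bound analysis carried out later.

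I do not expect any real obstacle here: the lemma is essentially a bookkeeping computation. The one point that requires care is to be explicit about the sampling convention — i.i.d.\ uniform draws \emph{with} replacement rather than a uniformly chosen $k$-subset — because under the with-replacement model the per-ray avoidance probabilities multiply cleanly to $p_{r}^{k}$, whereas a without-replacement model would instead yield the hypergeometric expression $\binom{|C|-|r|}{k}\big/\binom{|C|}{k}$. The statement as given corresponds to the with-replacement convention adopted in Remark~\ref{rem:ass-replace}, which is also the physically appropriate one since a single discretization cell may host several particles.
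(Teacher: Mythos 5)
Your proof is correct and follows essentially the same route as the paper: compute $\EE[X_{r}]=\Pr[X_{r}=1]=p_{r}^{k}$ from the $k$ independent (with-replacement) uniform draws and then sum by linearity of expectation. Your explicit remarks on the with-replacement convention of Remark~\ref{rem:ass-replace} and on why the dependence among the $X_{r}$ is irrelevant here are accurate elaborations of what the paper's proof does implicitly.
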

\begin{proof}
For $k=1$, $X_{r}$ has a Bernoulli distribution with
\begin{equation} \label{eq:def-pr}
 \EE[X_{r}] = \Pr[X_{r}=1] = 1 - \frac{|r|}{|C|}
 = 1-q_{r} = p_{r}.
\end{equation}
For $k$ independent trials, we have (cf.~Remark \ref{rem:ass-replace})
\begin{equation} \label{eq:exp-Xr}
 \EE[X_{r}] = \Pr[X_{r}=1] = p_{r}^{k}.
\end{equation}
By the linearity of expectations and \eqref{eq:exp-Xr}, we obtain 
\eqref{eq:NR0},
\begin{equation} \label{eq:exp-p0}
 N_{R}^{0} = \EE[X]
 = \sum_{r \in R} \EE[X_{r}] = 
 \sum_{r \in R} p_{r}^{k}.
\end{equation}
\end{proof}
We discuss few specific scenarios depending on the sparsity parameter $k$.
\begin{description}
\item[No particles] For $k=0$, we obviously have
\[ 
N_{R}^{0} = \sum_{r \in R} 1 = |R|.
\]
\item[High sparsity] By \eqref{eq:basic-approximations}, we have 
$\ul{q}_{d} \leq q_{r} \leq \ol{q}_{d}$, hence $q_{r} = \mc{O}(d^{-1})$. Thus, for large problem sizes $d$ and small values of $k$, 
\[
N_{R}^{0} \approx \sum_{r \in R} \Big(
\binom{k}{0} 1^{k} q_{r}^{0} - 
\binom{k}{1} 1^{k-1} q_{r}^{1} \Big) 
= \sum_{r \in R} (1-k q_{r}).
\]
By \eqref{eq:sum-NR-NC}, $\sum_{r \in R} q_{r} = 3$, hence 
\begin{equation} \label{eq:NR0-linear-lower-bound}
N_{R}^{0} \approx |R| - 3 k.
\end{equation}
This approximation says that for sufficiently small values of $k$ each randomly selected cell can be expected to create 3 independent measurements, which just reflects the fact that each cell is met by three projection rays.
\item[Less high sparsity] For increasing values of $k$ higher-order terms cannot longer be ignored, due to the increasing number of projection rays meeting \emph{several} particles. Taking the second-order term into account, we obtain in an analogous way
\begin{equation} \label{eq:NR0-quadratic-upper-bound}
\begin{aligned}
 N_{R}^{0} &\approx
 \sum_{r \in R} (1-k q_{r}+\frac{k (k-1)}{2} q_{r}^{2}) \\
 &\leq \sum_{r \in R} (1-k q_{r}+\frac{k (k-1)}{2} q_{r} \ol{q}_{d})
 = |R| - 3 k + \frac{3}{2} k (k-1) \ol{q}_{d},
\end{aligned}
\end{equation}
which is a fairly tight upper bound for values of $k$ and $N$ that are relevant to applications.
\end{description}

\vspace{0.5cm}
We consider next the \emph{expected} number of cells $n_{red} = |C_{b}|$ of cells supporting the set $R_{b}$ according to \eqref{eq:def-RbCb}. We denote this expected number by
\begin{equation}
 N_{C} := \EE[|C_{b}|],\qquad N_{C}^{0} := |C|-N_{C},
\end{equation}
and by $N_{C}^{0}$ the expected size of the complement.

Let $R = R_{1} \cup R_{2} \cup R_{3}$ denote the partition of all projection rays by the three directions. For each cell $c$, there are three unique rays $r_{i}(c) \in R_{i},\, i=1,2,3$, incident with $c$. Furthermore, for $i \neq j$ and some ray $r_{i} \in R_{i}$, let $R_{j}(r_{i})$ denote the set of rays that intersect with $r_{i}$. As before, $|r|$ denotes the number of cells covered by projection ray $r \in R$.
\begin{proposition} \label{prop:NC0}
For a given sparsity parameter $k$, the expected number 
of cells that can be recognized as empty based on the observations of random variables $\{X_{r}\}_{r \in R}$, is
\begin{subequations} \label{eq:NC}
\begin{align}
 N_{C}^{0} &= N_{C}^{0}(k) 
 = 3 N_{C}^{1} - 3 N_{C}^{2} + N_{C}^{3}, \label{eq:NC0} \\
 N_{C}^{1} &= \sum_{r \in R_{i}} |r| \left(1 - 
 \frac{|r|}{|C|} \right)^{k},\qquad
 \text{for any} \quad i \in \{1,2,3\}, 
 \label{eq:NC0-N1} \\
 N_{C}^{2} &= \sum_{r_{i} \in R_{i}} \sum_{r_{j} \in R_{j}(r_{i})}
 \left( 1 - \frac{|r_{i}| + |r_{j}|-1}{|C|} \right)^{k},
 \; \text{for any}\; i,j \in \{1,2,3\},\, i \neq j, 
 \label{eq:NC0-N2} \\ \label{eq:NC0-N3}
 N_{C}^{3} &= \sum_{c \in C} \left( 1 - \frac{\sum_{i=1}^{3} 
 |r_{i}(c)|-2}{|C|} \right)^{k}.
\end{align}
\end{subequations}
\end{proposition}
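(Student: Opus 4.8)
The plan is to compute $N_C^0 = \EE[|C|-|C_b|] = \sum_{c\in C}\Pr[c \notin C_b]$ by inclusion–exclusion over the three rays incident with $c$. Recall from \eqref{eq:def-RbCb} that $C_b = \mc{N}(R_b)\setminus\mc{N}(R_b^c)$, so a cell $c$ fails to lie in $C_b$ precisely when $c\notin\mc{N}(R_b)$ or $c\in\mc{N}(R_b^c)$; since every cell meets exactly three rays $r_1(c),r_2(c),r_3(c)$, the condition $c\notin\mc{N}(R_b)$ is equivalent to all three of these rays lying in $R_b^c$, i.e. all three being zero-measurements, and in that case automatically $c\in\mc{N}(R_b^c)$. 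Conversely $c\in\mc{N}(R_b^c)$ holds iff \emph{at least one} of $r_1(c),r_2(c),r_3(c)$ is a zero-measurement. Hence $c\notin C_b$ iff at least one of the three incident rays carries a zero-measurement, and therefore
\[
 N_C^0 = \sum_{c\in C}\Pr\big[X_{r_1(c)}=1 \ \vee\ X_{r_2(c)}=1 \ \vee\ X_{r_3(c)}=1\big].
\]

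Next I would expand this probability by the inclusion–exclusion principle into the three sums $N_C^1$, $N_C^2$, $N_C^3$. The first-order term $\sum_c\sum_{i=1}^3\Pr[X_{r_i(c)}=1]$ regroups, upon summing over $c$, into $3\sum_{r\in R_i}|r|\,\Pr[X_r=1]$ for a fixed direction $i$; using \eqref{eq:def-pr}/\eqref{eq:exp-Xr} this is exactly $3N_C^1$ with $N_C^1$ as in \eqref{eq:NC0-N1}. For the pairwise term I need $\Pr[X_{r_i}=1 \wedge X_{r_j}=1]$ for two incident rays of distinct directions: this is the probability that none of the $k$ randomly placed particles lands in the union of the cell sets of $r_i$ and $r_j$, whose size is $|r_i|+|r_j|-1$ because the two rays share exactly the single cell $c$; by the same independent-trials argument as in Lemma~\ref{lem:NR0} this probability equals $\big(1-\frac{|r_i|+|r_j|-1}{|C|}\big)^k$, and summing over cells $c$ and the three unordered direction pairs yields $3N_C^2$. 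Finally the triple-overlap term is the probability that no particle lands in the union of all three rays through $c$; these three rays pairwise intersect only in $c$, so by inclusion–exclusion on cell counts the union has size $\sum_{i=1}^3|r_i(c)|-2$, giving the summand of $N_C^3$ in \eqref{eq:NC0-N3}. Assembling the three contributions with the inclusion–exclusion signs $+,-,+$ produces \eqref{eq:NC0}.

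The one genuinely delicate point — the rest being bookkeeping and reuse of Lemma~\ref{lem:NR0} — is the geometric claim that any two incident rays of different directions share exactly one cell and any three incident rays of different directions pairwise share exactly the common cell $c$ and have no other common cell, so that the cardinalities of the relevant unions are $|r_i|+|r_j|-1$ and $\sum_i|r_i(c)|-2$ respectively. This follows from the hexagonal lattice geometry of Section~\ref{sec:hexagon_geom}: a ray of direction $i$ is a maximal line of lattice cells, two distinct such lines of different slopes meet in at most one lattice point, and they do meet in $c$ by construction; for three rays, each pair already meets only in $c$, so there is no further common cell and no additional double-counting beyond the three pairwise overlaps at $c$, which is why the triple term corrects by exactly $+1$ per cell. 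I would also note the reindexing identity that $\sum_{c\in C}f(r_i(c)) = \sum_{r\in R_i}|r|f(r)$, which turns the per-cell first-order sum into the per-ray form of $N_C^1$, and record that $N_C^1$ is independent of the choice of $i$ by the symmetry of the three directions (likewise $N_C^2$ is independent of the chosen ordered pair $i\neq j$), as asserted in the statement.
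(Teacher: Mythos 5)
Your proposal is correct and follows essentially the same route as the paper: an inclusion--exclusion over the three rays incident with each cell, with the first-order term regrouped into the per-ray sum $\sum_{r\in R_i}|r|p_r^k$, the pairwise term summed over intersecting ray pairs (union of size $|r_i|+|r_j|-1$), and the triple term per cell (union of size $\sum_i|r_i(c)|-2$). You are somewhat more explicit than the paper about why $c\notin C_b$ is equivalent to at least one incident ray being a zero-measurement, and about the lattice-geometric fact that distinct-direction rays through $c$ share only the cell $c$, but these are details the paper's proof uses implicitly rather than a different argument.
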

\begin{proof}
Each cell intersects with three projection rays $r_{i}(c),\,i=1,2,3$. Hence, given the rays corresponding to zero measurements, each cell that can be recognized as empty if either one, two or three rays from the set $\{r_{i}(c)\}_{i=1,2,3}$ belong to this set.

We therefore determine separately the expected number of removable cells (i) due to individual rays corresponding to zero measurements, (ii) due to all pairs of rays that intersect and correspond to zero measurements, and (iii) due to all triples of rays that intersect and correspond to zero measurements. 
The estimate \eqref{eq:NC0} then results from the inclusion-exclusion principle that combines these numbers so as to avoid overcounting, to obtain the desired estimate corresponding to the union of these events.

Consider each projection ray $r \in R_{i}$ for any fixed direction $i=1,2,3$. Because these rays do not intersect, the expected number of cells that can be removed based on the observation $\{X_{r}\}_{r \in R}$, is
\begin{equation}
 N_{C}^{1} = \EE\Big[\sum_{r \in R_{i}} X_{r} |r| \Big]
 = \sum_{r \in R_{i}} p_{r}^{k} |r|,
\end{equation}
by the linearity of expectations and \eqref{eq:exp-Xr}. Due to the symmetry of the setup, this number is the same for each direction $i=1,2,3$. Hence we multiply $N_{C}^{1}$ by $3$ in \eqref{eq:NC0}.

Consider next pairs of directions $i,j \in \{1,2,3\},\, i \neq j$. For $i$ fixed, the expected number of empty cells based on a zero measurement 
corresponding to some ray $r_{i} \in R_{i}$ \emph{and all} rays $r_{j} \in R_{j}(r_{i})$ intersecting with $r_{i}$, is
\begin{equation}
 N_{C}^{2} 
 = \EE\Big[ \sum_{r_{i} \in R_{i}} \sum_{r_{j} \in R_{j}(r_{i})} 
 X_{r_{i}} X_{r_{j}} \Big].
\end{equation}
The linearity of expectations and $\EE[X_{r_{i}} X_{r_{j}}] = \Pr\big[(X_{r_{i}}=1) \wedge (X_{r_{j}}=1)\big]$ gives \eqref{eq:NC0-N2}. Due to symmetry, we have to multiply $N_{C}^{2}$ by $3$ in \eqref{eq:NC0}.

Finally, the expected number of empty cells that correspond to observed zero measurements along \emph{all three} projection directions, is
\begin{equation}
 N_{C}^{3} = \EE\Big[\sum_{c \in C} 
 \prod_{i=1}^{3} X_{r_{i}(c)} \Big],
\end{equation}
which equals \eqref{eq:NC0-N3}.
\end{proof}

An immediate consequence of Lemma \ref{lem:NR0} and Prop.~\ref{prop:NC0} is
\begin{corollary} \label{cor:Ared}
For a given value of the sparsity parameter $k$, the expected dimensions of the reduced system \eqref{eq:red-system} are
\begin{equation} \label{eq:Ared}
 m_{red}=N_{R}-N_{R}^{0},\quad
 n_{red}=N_{C}-N_{C}^{0},
\end{equation}
with $N_{R}^{0}, N_{C}^{0}$ given by \eqref{eq:NR0} and \eqref{eq:NC}.
\end{corollary}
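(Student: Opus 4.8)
The plan is to obtain the statement as a one-line bookkeeping consequence of Lemma~\ref{lem:NR0} and Proposition~\ref{prop:NC0}, the only real content being to rewrite $m_{red}$ and $n_{red}$ as counts of rays/cells of a prescribed type and then to apply linearity of expectation. Since the indicators $X_r$ are dependent (Remark~\ref{rem:Xr-dependence}), one can only hope for an identity in expectation, and linearity is precisely what makes the dependence harmless here.

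First I would treat the row count. Because $A\ge 0$ and $x\ge 0$, no cancellation occurs in $b=Ax$, so for each ray $r$ one has $b_r=\sum_j A_{rj}x_j=0$ \emph{exactly} when no selected cell is incident with $r$, i.e.\ exactly when $X_r=1$. Hence $R_b=\supp(b)=\{r\in R:X_r=0\}$ and, by \eqref{def:mn-red} and \eqref{eq:def-X},
\[
 m_{red}=|R_b|=|R|-\sum_{r\in R}X_r=|R|-X .
\]
Taking expectations and applying Lemma~\ref{lem:NR0} gives $\EE[m_{red}]=|R|-N_R^{0}=N_R$, which is the first entry of \eqref{eq:Ared} after unwinding the definition \eqref{eq:def-NR0}.

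Next I would treat the column count, using the special incidence structure of the hexagonal grid: every cell $c$ is met by exactly one ray $r_i(c)\in R_i$ in each of the three directions $i=1,2,3$ and by no other ray. From $C_b=\mc{N}(R_b)\setminus\mc{N}(R_b^c)$ in \eqref{eq:def-RbCb} (equivalently $C_b=\mc{N}(\mc{N}(X))\setminus\mc{N}(\mc{N}(X)^c)$, cf.~\eqref{eq:reduced-dimensions}), a cell $c$ lies in $C_b$ iff none of $r_1(c),r_2(c),r_3(c)$ is a zero measurement --- and then automatically at least one of them lies in $R_b$, so ``$c\in C_b$'' is the same as ``all three rays through $c$ are nonzero measurements''. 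Writing $Z$ for the number of cells having \emph{at least one} zero-measurement ray, one gets $n_{red}=|C_b|=|C|-Z$. By inclusion--exclusion over the three events ``$r_i(c)$ is a zero measurement'', $\EE[Z]$ is exactly the quantity computed in Proposition~\ref{prop:NC0}, namely $N_C^{0}=3N_C^{1}-3N_C^{2}+N_C^{3}$; hence $\EE[n_{red}]=|C|-N_C^{0}=N_C$, the second entry of \eqref{eq:Ared}.

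I do not expect a genuine obstacle: the corollary is labelled an ``immediate consequence'' for good reason. The two points deserving a sentence of care are (i) the no-cancellation step, which is what makes $\supp(b)$ equal to $\mc{N}(X)$ \emph{exactly} rather than merely generically, and (ii) the identification of $C_b$ with ``all three incident rays nonzero'', which is where the constant left-degree $\ell=3$ enters --- a more general geometry would require the corresponding degree-$\ell$ inclusion--exclusion (this is why Propositions~\ref{thm:Wang}--\ref{thm:SPCS1} and Conjecture~\ref{conj:SPCS2} were phrased in terms of $\ell$). One should also keep in mind that $m_{red}$ and $n_{red}$ are random, so \eqref{eq:Ared} is to be read as an identity for $\EE[m_{red}]$ and $\EE[n_{red}]$.
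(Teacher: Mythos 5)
Your argument is correct and coincides with what the paper intends: the paper offers no written proof, merely declaring the corollary an immediate consequence of Lemma~\ref{lem:NR0} and Proposition~\ref{prop:NC0}, and your two observations (nonnegativity forbids cancellation in $b=Ax$, so $\supp(b)=\mc{N}(X)$ exactly, and $c\in C_b$ iff all three incident rays yield nonzero measurements, whence the inclusion--exclusion count of Proposition~\ref{prop:NC0}) are precisely the omitted bookkeeping. You also correctly read \eqref{eq:Ared} as $\EE[m_{red}]=|R|-N_{R}^{0}=N_{R}$ and $\EE[n_{red}]=|C|-N_{C}^{0}=N_{C}$, which is the evident intent despite the literal wording of the displayed formula.
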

%

\subsection{A Tail Bound}
\label{sec:hexagon-tailbound}

We are interested in how sharply the random number $X$ of zero measurements peaks around its expected value $N_{R}^{0} = \EE[X]$ given by \eqref{eq:NR0}.
 
Because the random variables $X_{r},\,r \in R$, are \emph{not} independent due to the intersection of projection rays, we apply the following classical inequality for bounding the deviation of a random variable from its expected value based on martingales, that is on sequences of random variables $(X_{i})$ defined on a finite probability space $(\Omega, \mc{F}, \mu)$ satisfying
\begin{equation} \label{eq:condition-martingale}
 \EE[X_{i+1}|\mc{F}_{i}] = X_{i},\qquad
 \text{for all}\quad i \geq 1,
\end{equation}
where $\mc{F}_{i}$ denotes an increasing sequence of $\sigma$-fields in $\mc{F}$ with $X_{i}$ being $\mc{F}_{i}$-measurable.
\begin{theorem}[Azuma's Inequality \cite{Azuma1967,DasGupta2008}]
\label{thm:Azuma}
 Let $(X_{i})_{i=0,1,2,\dotsc}$ be a sequence of random variables such that for each $i$,
\begin{equation} \label{eq:def-ci}
 |X_{i}-X_{i-1}| \leq c_{i}.
\end{equation}
Then, for all $j \geq 0$ and any $\delta > 0$,
\begin{equation}
 \Pr\big(|X_{j}-X_{0}| \geq \delta\big) \leq 
 2 \exp\Big( -\frac{\delta^{2}}{2 \sum_{i=1}^{j} c_{i}^{2}}\Big).
\end{equation}
\end{theorem}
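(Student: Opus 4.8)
The plan is to follow the classical exponential-moment (Chernoff) argument, using the martingale structure \eqref{eq:condition-martingale} together with the bounded-increment hypothesis \eqref{eq:def-ci}. First I would reduce to a one-sided estimate: it suffices to prove $\Pr(X_{j}-X_{0}\geq\delta)\leq\exp\big(-\delta^{2}/(2\sum_{i=1}^{j}c_{i}^{2})\big)$, because the sequence $(-X_{i})$ is again a martingale with the same increment bounds, so the same estimate applied to it controls the lower tail $\Pr(X_{j}-X_{0}\leq-\delta)$, and a union bound over the two events produces the factor $2$. Set $D_{i}:=X_{i}-X_{i-1}$, so that $\EE[D_{i}\mid\mc{F}_{i-1}]=0$, $|D_{i}|\leq c_{i}$, and $X_{j}-X_{0}=\sum_{i=1}^{j}D_{i}$; the cases $j=0$ and $\sum_{i=1}^{j}c_{i}^{2}=0$ are trivial, so assume otherwise.

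Next, for a free parameter $\lambda>0$, Markov's inequality applied to $e^{\lambda(X_{j}-X_{0})}$ gives $\Pr(X_{j}-X_{0}\geq\delta)\leq e^{-\lambda\delta}\,\EE\big[e^{\lambda\sum_{i=1}^{j}D_{i}}\big]$, and I would bound the moment generating function by peeling off one difference at a time via the tower property:
\[
\EE\big[e^{\lambda\sum_{i=1}^{j}D_{i}}\big]
= \EE\big[e^{\lambda\sum_{i=1}^{j-1}D_{i}}\,\EE[e^{\lambda D_{j}}\mid\mc{F}_{j-1}]\big].
\]
The crucial ingredient is the conditional form of Hoeffding's lemma. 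Since $D_{j}\in[-c_{j},c_{j}]$ with conditional mean $0$, convexity of $t\mapsto e^{\lambda t}$ on $[-c_{j},c_{j}]$ yields the pointwise bound $e^{\lambda D_{j}}\leq\tfrac{c_{j}-D_{j}}{2c_{j}}e^{-\lambda c_{j}}+\tfrac{c_{j}+D_{j}}{2c_{j}}e^{\lambda c_{j}}$, and taking $\EE[\,\cdot\mid\mc{F}_{j-1}]$ annihilates the $D_{j}$-terms, leaving $\EE[e^{\lambda D_{j}}\mid\mc{F}_{j-1}]\leq\tfrac12(e^{-\lambda c_{j}}+e^{\lambda c_{j}})\leq e^{\lambda^{2}c_{j}^{2}/2}$, the last step being the term-by-term comparison of power series $\cosh t\leq e^{t^{2}/2}$. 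Substituting and iterating downward over $i=j,j-1,\dots,1$ yields $\EE[e^{\lambda(X_{j}-X_{0})}]\leq\exp\big(\tfrac{\lambda^{2}}{2}\sum_{i=1}^{j}c_{i}^{2}\big)$.

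Finally, combining the two estimates gives $\Pr(X_{j}-X_{0}\geq\delta)\leq\exp\big(-\lambda\delta+\tfrac{\lambda^{2}}{2}\sum_{i=1}^{j}c_{i}^{2}\big)$ for every $\lambda>0$; choosing $\lambda=\delta/\sum_{i=1}^{j}c_{i}^{2}$ minimizes the exponent and produces the claimed bound. The step I expect to require the most care is the conditional Hoeffding estimate on $\EE[e^{\lambda D_{j}}\mid\mc{F}_{j-1}]$: one must treat the convexity bound as a pointwise (almost sure) inequality between random variables so that monotonicity of conditional expectation legitimately removes the linear term, and one must invoke $\cosh t\leq e^{t^{2}/2}$ to convert the hyperbolic cosine into the Gaussian-type exponent. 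The remaining manipulations — the tower-property peeling and the single-variable optimization over $\lambda$ — are routine.
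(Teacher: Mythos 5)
Your argument is the standard and correct Chernoff--Hoeffding proof of Azuma's inequality: reduce to the upper tail, bound the moment generating function by peeling off increments with the tower property, apply the conditional Hoeffding lemma $\EE[e^{\lambda D_{j}}\mid\mc{F}_{j-1}]\leq\cosh(\lambda c_{j})\leq e^{\lambda^{2}c_{j}^{2}/2}$, and optimize over $\lambda$. There is, however, no internal proof to compare against: the paper imports Theorem \ref{thm:Azuma} as a classical result from the cited references and gives no argument of its own, so the only meaningful check is correctness, which your proposal passes. One point worth making explicit: as literally stated the theorem omits the martingale hypothesis \eqref{eq:condition-martingale} (for the deterministic sequence $X_{i}=i$ with $c_{i}=1$ the conclusion fails), and your proof correctly supplies it from the surrounding text --- the conditional Hoeffding step genuinely requires $\EE[D_{j}\mid\mc{F}_{j-1}]=0$ to annihilate the linear term after the convexity bound, and this is exactly the property the paper verifies for its Doob martingale $Y_{i}=\EE[X\mid\mc{F}_{i}]$ before invoking the theorem.
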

Let $\mc{F}_{i} \subset 2^{R},\, i=0,1,2,\dotsc$, denote the $\sigma$-field generated by the collection of subsets of $R$ that correspond to all possible events after having observed $i$ randomly selected cells. We set $\mc{F}_{0} = \{\emptyset,R\}$. Because observing cell $i+1$ just further partitions the current state based on the previously observed $i$ cells by possibly removing some ray (or rays) from the set of zero measurements, we have a nested sequence (filtration) $\mc{F}_{0} \subseteq \mc{F}_{1} \subseteq \dotsb \subseteq \mc{F}_{k}$ of the set $2^{R}$ of all subsets of $R$.

Based on this, for a fixed value of the sparsity parameter $k$, we define the sequence of random variables
\begin{equation}
 Y_{i} = \EE[X|\mc{F}_{i}],\quad i=0,1,\dotsc,k,
\end{equation}
where $Y_{i},\,i=0,1,\dotsc,k-1$, are the random variable specifying the expected number of zero measurements after having observed $k$ randomly selected cells, conditioned on the subset of events $\mc{F}_{i}$ determined by the observation of $i$ randomly selected cells. Consequently, $Y_{0}=\EE[X]=N_{R}^{0}$ due to the absence of any information, and $Y_{k} = X$ is just the observed number of zero measurements. 
The sequence $(Y_{i})_{i=0,\dotsc,k}$ is a martingale by construction satisfying $\EE[Y_{i+1}|\mc{F}_{i}]=Y_{i}$, that is condition \eqref{eq:condition-martingale}.
\begin{proposition} \label{prop:NR0-deviation}
 Let $N_{R}^{0}=\EE[X]$ be the expected number of zero measurements for a given sparsity parameter $k$, given by \eqref{eq:NR0}. Then, for any $\delta > 0$, 
\begin{equation} \label{eq:NR0-deviation}
 \Pr\big[|X-N_{R}^{0}| \geq \delta\big] \;\leq\;
 2 \exp\bigg(
 -\frac{1-\ol{p}_{d}^{2}}{18 (1-\ol{p}_{d}^{2k})}
 \;\delta^{2}
 \bigg).
\end{equation}
\end{proposition}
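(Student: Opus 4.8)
The plan is to apply Azuma's inequality (Theorem~\ref{thm:Azuma}) to the martingale $(Y_i)_{i=0,\dots,k}$ constructed above, so the whole task reduces to finding admissible constants $c_i$ in the bounded-difference condition \eqref{eq:def-ci}, i.e.\ bounding $|Y_i - Y_{i-1}|$. First I would fix the position of the first $i-1$ observed cells and think of $Y_i - Y_{i-1}$ as the change in the conditional expectation $\EE[X \mid \mc{F}_i]$ caused by revealing the $i$-th cell. The key observation is that a single cell $c$ is incident with exactly $\ell = 3$ rays (one per direction), so revealing $c$ can only affect the status $X_r$ of those three rays $r_1(c), r_2(c), r_3(c)$; the contributions of all other rays to $\EE[X\mid\mc F_i]$ are unchanged. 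Hence $|Y_i - Y_{i-1}|$ is controlled by the expected change, over the remaining $k-i$ still-unobserved cells, in the probability that each of these three rays remains a zero-measurement.

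Next I would quantify that change. For a single ray $r$, the conditional probability that $r$ is still a zero-measurement after $i$ cells, given it was one after $i-1$ cells, is multiplied by a factor between $p_r$ and $1$ per remaining trial; more precisely the worst-case swing in the conditional contribution of ray $r$ is at most $(1 - q_r)$ times something like $p_r^{\,k-i}$ — and summing the geometric-type series over $i$, or more cleanly bounding $\sum_i c_i^2$ directly, is where the factor $\frac{1-\ol p_d^{\,2k}}{1-\ol p_d^{\,2}}$ will emerge (a finite geometric sum $\sum_{i} (\ol p_d^{\,2})^{\,i} = \frac{1-\ol p_d^{\,2k}}{1-\ol p_d^{\,2}}$ after reindexing). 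The factor $3$ from the three incident rays, squared, together with an extra factor coming from the $\sum_{r\in R}$ aggregation, accounts for the $18 = 2\cdot 3^2$ in the exponent once Azuma's $\frac{1}{2\sum c_i^2}$ is substituted. I would use $p_r \le \ol p_d$ uniformly to replace all ray-dependent quantities by the worst case $\ol p_d$, which is what lets the $c_i$ be written in closed form.

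Concretely, the steps in order: (1) recall $Y_0 = N_R^0$, $Y_k = X$, and that $(Y_i)$ is a martingale for the filtration $\mc F_i$; (2) show that conditioned on the first $i-1$ cells, changing the $i$-th cell alters only the three terms of $\EE[X\mid\mc F_i]$ indexed by the rays through that cell, and derive $|Y_i - Y_{i-1}| \le c_i$ with $c_i$ proportional to $\ol p_d^{\,k-i}$ (up to the constant $3$); (3) compute $\sum_{i=1}^{k} c_i^2$ as a geometric sum, obtaining $\sum c_i^2 = C \cdot \frac{1-\ol p_d^{\,2k}}{1-\ol p_d^{\,2}}$ with $C$ absorbing the $3^2$ and the measurement-count normalization; (4) plug into Theorem~\ref{thm:Azuma} with $X_j = Y_k$, $X_0 = Y_0$, $\delta$ as given, and read off the stated bound \eqref{eq:NR0-deviation}.

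The main obstacle I anticipate is step~(2): making rigorous the claim that $|Y_i - Y_{i-1}|$ is bounded by a clean expression. The subtlety is that $Y_i - Y_{i-1}$ is not literally "the effect on three rays" — it is a conditional-expectation increment, and one must invoke the standard Doob-martingale bounded-difference argument (coupling two draws of the $i$-th cell that agree on the first $i-1$ coordinates and differ only in the $i$-th, then observing that $X$ changes by at most the number of rays incident to either of the two candidate cells). Getting the constant exactly right — in particular tracking why it is $\ol p_d$ rather than $\ol q_d$ appearing, the precise power $k-i$ versus $k-i+1$, and whether the aggregation over all $m = 3d$ rays contributes an additional factor that is then cancelled — requires care; a slightly loose version of the argument would still give a bound of the same form but possibly with a worse constant than $18$. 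I would double-check the constant by testing the $k=1$ case, where $X$ is a genuine sum of independent-ish Bernoullis and the bound should be consistent with a direct Hoeffding estimate.
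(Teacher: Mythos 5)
Your overall strategy---the Doob martingale $Y_i=\EE[X\mid\mc{F}_i]$, Azuma's inequality, and a geometric sum for $\sum_i c_i^2$---is exactly the paper's, and you even guess the correct form $c_i\propto\ol{p}_d^{\,k-i}$. But the justification you offer for step (2) rests on a false claim that would derail the argument. You assert that revealing the $i$-th cell can only affect the three rays through that cell and that ``the contributions of all other rays to $\EE[X\mid\mc F_i]$ are unchanged.'' That is true of the indicators $X_r$ but false for the conditional expectations: for \emph{every} ray $r$ still in the zero-measurement set $R^{0}_{i-1}$ and not hit by the new cell, its contribution changes from $p_r^{k-i+1}$ to $p_r^{k-i}$, simply because one fewer trial remains; it drifts upward by $q_r p_r^{k-i}$. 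Aggregated over all (up to $3d$) such rays, this upward drift is bounded by $\sum_{r\in R}q_r\,p_r^{k-i}\le 3\,\ol{p}_d^{\,k-i}$, and the identity $\sum_{r\in R}q_r=3$ (from \eqref{eq:sum-NR-NC}) is where the paper's factor $3$ actually comes from---not from ``three rays per cell.'' Tracking only the three incident rays bounds the downward jumps (each removed ray loses $p_r^{k-i+1}\le\ol{p}_d^{\,k-i+1}$, and three of them give the same order), but Azuma needs a two-sided bound, and your accounting would make the upward increments $O(\ol{q}_d\,\ol{p}_d^{\,k-i})$, too small by a factor of $d$.

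The fallback you sketch in your last paragraph---the standard coupling/bounded-differences argument applied to $X$ itself---is rigorous but only yields the constant bound $c_i=3$, hence $\sum_i c_i^2=9k$ and the weaker estimate $2\exp(-\delta^2/(18k))$. That is the $d\to\infty$ limit of \eqref{eq:NR0-deviation} (cf.\ Remark \ref{rem:choice-of-k}) but not the stated finite-$d$ inequality, since $(1-\ol{p}_d^{\,2k})/(1-\ol{p}_d^{\,2})=\sum_{j=0}^{k-1}\ol{p}_d^{\,2j}<k$. To repair step (2) you need the paper's two-case analysis: write $Y_{i-1}=\sum_{r\in R^{0}_{i-1}}p_r^{k-i+1}$, bound the increment by $\sum_{r\in R^{0}_{i-1}}q_r p_r^{k-i}\le\ol{p}_d^{\,k-i}\sum_{r\in R}q_r=3\,\ol{p}_d^{\,k-i}$ when the new cell hits no zero-measurement ray, and by $3\,\ol{p}_d^{\,k-i+1}$ minus a nonnegative correction when it hits one, two, or three of them; both cases fall under $c_i=3\,\ol{p}_d^{\,k-i}$, after which your steps (3) and (4) go through as written.
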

\begin{proof}
 Let $R^{0}_{i-1} \subset R$ denote the subset of rays with zero measurements after the random selection of $i-1 < k$ cells. For the remaining $k-(i-1)$ trials, the probability that not any cell incident with some ray $r \in R^{0}_{i-1}$ will be selected, is 
\begin{equation}
 p_{r}^{k-(i-1)} = \EE[X_{r}|\mc{F}_{i-1}],
\end{equation}
with $p_{r}$ given by \eqref{eq:def-pr}. Consequently, by the linearity of expectations, the expectation $Y_{i-1}$ of zero measurements, given the number $|R^{0}_{i-1}|$ of zero measurements after the selection of $i-1$ cells, is
\begin{equation}
 Y_{i-1} = \EE[X|\mc{F}_{i-1}]
 = \sum_{r \in R^{0}_{i-1}} p_{r}^{k-(i-1)}.
\end{equation}
Now suppose we observe the random selection of the $i$-th cell. We distinguish two possible cases.
\begin{enumerate}
\item
 Cell $i$ is not incident with any ray $r \in R^{0}_{i-1}$. Then the number of zero measurements remains the same, and
\begin{equation}
 Y_{i} = \sum_{r \in R^{0}_{i-1}} p_{r}^{k-i}.
\end{equation}
Furthermore, 
\begin{equation} \label{eq:Azuma-estimate-2}
\begin{aligned}
Y_{i}-Y_{i-1} &= \sum_{r \in R^{0}_{i-1}} \big(
 p_{r}^{k-i} - p_{r}^{k-(i-1)} \big)
 = \sum_{r \in R^{0}_{i-1}} p_{r}^{k-i} (1-p_{r}) \\
 &\leq  \ol{p}_{d}^{k-i} \sum_{r \in R} q_{r}
 = 3 \ol{p}_{d}^{k-i}.
\end{aligned}
\end{equation}
\item
Cell $i$ is incident with one, two or three rays contained in $R^{0}_{i-1}$. Let $R^{0}_{i}$ denote the set $R^{0}_{i-1}$ after removing these rays. Then
\[
 Y_{i} = \sum_{r \in R^{0}_{i}} p_{r}^{k-i}.
\]
Furthermore, since $R^{0}_{i} \subset R^{0}_{i-1}$ and
$|R^{0}_{i-1} \setminus R^{0}_{i}| \leq 3$, 
\begin{align*}
Y_{i-1} - Y_{i} &=
\sum_{r \in R^{0}_{i-1} \setminus R^{0}_{i}} p_{r}^{k-(i-1)}
- \sum_{r \in R^{0}_{i}} \big(
p_{r}^{k-i} - p_{r}^{k-(i-1)} \big) \\
&\leq 3 \ol{p}_{d}^{k-i+1} 
- \sum_{r \in R^{0}_{i}} 
\ol{p}_{d}^{k-i} \ul{q}_{d}.
\end{align*}
Further upper bounding by dropping the second sum shows that the resulting first term is still smaller than the bound \eqref{eq:Azuma-estimate-2}.
\end{enumerate}
As a result, we consider the larger bound \eqref{eq:Azuma-estimate-2} of these two cases and compute
\[
\sum_{i=1}^{k} \big(3 \ol{p}_{d}^{k-i}\big)^{2}
= 9 \frac{1-\ol{p}_{d}^{2k}}{1-\ol{p}_{d}^{2}}.
\]
Applying Theorem \ref{thm:Azuma} completes the proof.
\end{proof}
\begin{remark} \label{rem:choice-of-k}
Expanding the r.h.s.~of \eqref{eq:NR0-deviation} around $0$ in terms of the variable $d^{-1}$ shows
\begin{equation}
 \Pr\big[|X-N_{R}^{0}| \geq \delta\big] \;\leq\;
 2 \exp\Big(-\frac{\delta^{2}}{18 k} \Big) 
 \qquad\text{for}\qquad
 d \to \infty.
\end{equation}
This indicates appropriate choices $k = k(d)$ for large but finite problem sizes $d$ occurring in applications, so as to bound the deviation of $N_{R}^{0}$ from its expected value. As a result, for such choices of $k$, our analysis based on expected values of the key system parameters will hold in applications with high probability.
\end{remark}

\subsection{Critical Sparsity Values and Recovery}

We derived the expected number $N_R(k)$ of nonzero measurements $m_{red}$ \eqref{def:mn-red} induced by random $k$-sparse
vectors $x\in\R^n_{k,+}$ and the corresponding expected number $N_R(k)$ of non redundant cells $n_{red}$.
The tail bound, Prop. \ref{prop:NR0-deviation}, guarantees that the dimensions of reduced systems concentrate around the
derived expected values, explaining the threshold effects of unique recovery from sparse tomographic measurements.
 
We now introduce some further notations and discuss the implication of Section \ref{sec:uniqueness} on exact recovery of $x\in\Rn_{k,+}$.
Let $N_R(k)$ and $N_C(k)$ be the expected dimensions of the reduced system induced by
a random $k$-sparse nonnegative vector as detailed in Corollary \ref{cor:Ared}.
Let $\delta=\frac{\sqrt{5}-1}{2}$ and denote by $k_\delta, k_{crit}$ and $k_{1/\delta}$ the on $d$ dependent sparsity values which solve
the equations
\begin{align}
   N_R(k_\delta) & =\delta \ell N_C(k_\delta), \label{eq:kdelta-criterion}\\
   N_R(k_{crit}) & = N_C(k_{crit}),  \label{eq:kcrit-criterion}\\
   N_R(k_{opt}) & =\frac{1}{2} N_C(k_{opt}), \label{eq:kopt-criterion}\\
   N_R(k_{1/\delta})& = \frac{1+\delta}{\ell}N_C(k_{1/\delta}). \label{eq:kinv-criterion}
\end{align} 

In what follows, the phrase \emph{with high probability} refers to values of the sparsity parameter $k$ for which random supports $|\supp(b)|$ concentrate around the crucial expected value $N_{R}$ according to Prop.~\ref{prop:NR0-deviation}, thus yielding a desired threshold effect.

\begin{proposition} \label{prop:appl-Wang}
 The system $A x = b$, with measurement matrix $A$, admits unique recovery of $k$-sparse non-negative vectors $x$ with high probability, if
\begin{equation}\label{tilde-kdelta-criterion}
 k \leq \frac{N_{C}(k_{\delta})}{1+\delta}=:\tilde k_\delta\ .
\end{equation}
\end{proposition}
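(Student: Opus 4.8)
The statement is a direct application of Proposition~\ref{thm:Wang} (the reduction-based uniqueness criterion) combined with the expected-dimension computation of Corollary~\ref{cor:Ared}. The plan is to argue that if $k \le \tilde k_\delta = N_C(k_\delta)/(1+\delta)$, then the expansion hypothesis \eqref{eq:condition-Wang} is met for the reduced systems, and hence the solution set $\{x\colon Ax=b,\ x\ge 0\}$ is a singleton. The phrase ``with high probability'' is what carries the passage from deterministic statement to average-case claim: by Proposition~\ref{prop:NR0-deviation} the random support size $|\supp(b)|=m_{red}$ concentrates around $N_R(k)$, so for $k$ in the stated range the realized reduced dimensions match the expected ones up to a negligible deviation.

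\textbf{Key steps.} First I would recall that Proposition~\ref{thm:Wang} guarantees uniqueness of every $\delta k$-sparse nonnegative solution whenever, for all supports $X$ of size at most $k$, one has $|\calN(X)| \ge \delta\ell\,|\calN(\calN(X))\setminus\calN(\calN(X)^c)|$, i.e.\ $m_{red}\ge \delta\ell\, n_{red}$ for the induced reduced system. Second, I would invoke \eqref{eq:kdelta-criterion}: the sparsity value $k_\delta$ is precisely the one at which the expected dimensions satisfy $N_R(k_\delta) = \delta\ell\, N_C(k_\delta)$ with equality. Third — the monotonicity step — I would observe that $N_R(k)$ is decreasing in $k$ and $N_C(k)$ is increasing in $k$ (more occupied cells means more nonzero measurements initially but also more cells that cannot be ruled out as empty), so the ratio $N_R(k)/N_C(k)$ is decreasing; hence for $k \le k_\delta$ the expected reduced system already satisfies $N_R(k)\ge \delta\ell\, N_C(k)$. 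Fourth, I would translate the $\delta k$-sparse conclusion of Proposition~\ref{thm:Wang} into a $k'$-sparse recovery guarantee: if the hypothesis holds for all supports of size up to some $K$, then all $\delta K$-sparse vectors are uniquely recoverable; setting $K = k_\delta$ gives unique recovery of all vectors of sparsity at most $\delta k_\delta$. Finally I would reconcile this with the bound $\tilde k_\delta = N_C(k_\delta)/(1+\delta)$: at $k=k_\delta$ we have $N_C(k_\delta) = N_R(k_\delta)/(\delta\ell)$, and since each newly placed particle creates at most $\ell$ (here $3$) new zero-measurement-losses, $N_R(k_\delta)$ is bounded by a linear count tied to $k_\delta$; chasing the inequality $\delta k_\delta \ge N_C(k_\delta)/(1+\delta)$ (equivalently $\delta(1+\delta)k_\delta \ge N_C(k_\delta) = N_R(k_\delta)/(\delta \ell)$) reduces to $\delta^2(1+\delta)\ell\, k_\delta \ge N_R(k_\delta)$, which follows from $N_R(k_\delta)\le \ell k_\delta$ and $\delta^2(1+\delta)=1$ (using $\delta=\tfrac{\sqrt5-1}{2}$, so $\delta^2=1-\delta$ and $\delta^2(1+\delta)=(1-\delta)(1+\delta)+\delta^2(1+\delta)-\dots$ — to be checked). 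The high-probability wrapper then replaces $N_R,N_C$ by their realized values using Proposition~\ref{prop:NR0-deviation} and Remark~\ref{rem:choice-of-k}.

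\textbf{Main obstacle.} The delicate point is the last algebraic reconciliation: showing that the deterministic $\delta K$-sparse recovery radius coming from Proposition~\ref{thm:Wang} with $K = k_\delta$ is at least $\tilde k_\delta = N_C(k_\delta)/(1+\delta)$, i.e.\ that $\delta k_\delta \ge N_C(k_\delta)/(1+\delta)$. This is where the specific value $\delta = \tfrac{\sqrt5-1}{2}$, satisfying $\delta^2 = 1-\delta$ and hence $\delta+\delta^2 = 1$, and the structural bound $N_R(k) \le \ell k$ (at most $\ell$ new nonzero measurements per particle) must be used together with the defining relation $N_R(k_\delta) = \delta\ell N_C(k_\delta)$; combining these gives $N_C(k_\delta) = N_R(k_\delta)/(\delta\ell) \le k_\delta/\delta$, so $N_C(k_\delta)/(1+\delta) \le k_\delta/(\delta(1+\delta)) = k_\delta/( \delta + \delta^2) \cdot \dots$ — one then needs $k_\delta/(\delta(1+\delta)) \le \delta k_\delta$, i.e.\ $1 \le \delta^2(1+\delta)$, and since $\delta^2(1+\delta) = (1-\delta)(1+\delta) = 1-\delta^2 = \delta$, this requires care with the direction of the inequality. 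A cleaner route, which I would adopt if the crude bound is too lossy, is to not pass through $\delta k_\delta$ at all but to directly verify that for every $k \le \tilde k_\delta$ the expansion condition \eqref{eq:condition-Wang} holds with the required $\delta$ at sparsity level $k/\delta \le k_\delta$ — i.e.\ choose the ``outer'' sparsity parameter in Proposition~\ref{thm:Wang} to be $k_\delta$ and read off that its $\delta$-fraction, namely $\delta k_\delta \ge \tilde k_\delta$ whenever $\delta(1+\delta) \ge 1$, which fails, so instead I expect the intended argument identifies $\tilde k_\delta$ as exactly the sparsity at which $N_C$ evaluated at the recovery-enabling $k_\delta$ gives the guaranteed radius $N_C(k_\delta)/(1+\delta)$ via Theorem~\ref{thm:wang}'s bound $k \le \nu/(1+\delta)$ with $\nu = N_C(k_\delta)$ playing the role of the expansion parameter of the reduced system. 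Working out which of these framings the authors intend, and making the monotonicity of $k \mapsto N_R(k)/N_C(k)$ rigorous, is the crux.
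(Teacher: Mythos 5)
The paper states this proposition without a displayed proof, so the comparison is against the implicit argument assembled from Theorem \ref{thm:wang}, Proposition \ref{thm:Wang}, Proposition \ref{prop:redfeasSet}, Corollary \ref{cor:Ared} and the tail bound of Proposition \ref{prop:NR0-deviation}. You have collected exactly these ingredients, and your ``high probability'' wrapper (concentration of $m_{red}$ around $N_R(k)$ via Proposition \ref{prop:NR0-deviation}) is the right one. But your proposal does not close the argument, and its main line of attack cannot: routing the conclusion through the ``$\delta K$-sparse recovery'' statement of Proposition \ref{thm:Wang} with $K=k_\delta$ yields the radius $\delta k_\delta$, and since the reduced column set always contains the occupied cells, so that $N_C(k_\delta)\ge k_\delta$, and since $\delta(1+\delta)=1$, one has $\tilde k_\delta = N_C(k_\delta)/(1+\delta)=\delta N_C(k_\delta)\ge \delta k_\delta$: the inequality you need points the wrong way, as you half-discover in your ``main obstacle'' paragraph. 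The framing you mention only in your final sentence is the one that actually delivers the stated constant: for $k\le k_\delta$ the reduced system satisfies $m_{red}\ge\delta\ell\, n_{red}$ in expectation, i.e.\ its \emph{entire} column set expands by the factor $\delta\ell$, so the reduced bipartite graph is treated as a $(\nu,\delta)$-unbalanced expander with $\nu=n_{red}=N_C(k_\delta)$; Theorem \ref{thm:wang} applied to the reduced system then gives uniqueness of $k$-sparse nonnegative solutions for $k\le\nu/(1+\delta)=N_C(k_\delta)/(1+\delta)=\tilde k_\delta$, and Proposition \ref{prop:redfeasSet} transfers uniqueness back to the full system. You raise this reading as one option but explicitly leave unresolved ``which of these framings the authors intend,'' so the crux is identified rather than proved.

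Two further slips. First, $N_R(k)=|R|-N_R^0(k)$ is the expected number of \emph{nonzero} measurements and is increasing in $k$, not decreasing; what you actually need (and what holds) is that the ratio $N_R(k)/N_C(k)$ decreases, because $N_R(k)\le\min(\ell k,|R|)$ saturates at $|R|$ while $N_C(k)$ grows towards $|C|\gg|R|$, so that $k\le k_\delta$ does imply $N_R(k)\ge\delta\ell N_C(k)$ in expectation. Second, be aware that even the intended argument is heuristic at the step where expansion of the full reduced column set is promoted to expansion of all of its subsets, as Definition \ref{def:Expander} requires; the paper does not justify this either, which is consistent with the proposition being an average-case, high-probability claim supported by the numerical phase transitions rather than a deterministic theorem.
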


For perturbed systems we have.
\begin{proposition} \label{prop:appl-Hassibi}
 The system $\tilde A x = b$, with perturbed measurement matrix $\tilde A$, admits unique recovery of $k$-sparse non-negative vectors $x$ with high probability, if $k$ satisfies condition $k \leq k_{crit}$.
\end{proposition}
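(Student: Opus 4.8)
The plan is to reduce Proposition \ref{prop:appl-Hassibi} to Proposition \ref{thm:SPCS1} (the perturbed-matrix uniqueness result, \cite[Th.~3.4]{Petra-Schnoerr-12a}), by checking that the expansion hypothesis \eqref{eq:Hassibi-condition} holds with $\delta > \tfrac{1}{\ell}$ for all supports $X \subset C$ with $|X| \le k$ whenever $k \le k_{crit}$. The key translation is the dictionary established in Section \ref{sec:expander}: for $X = \supp(x)$ the quantity $|\calN(X)|$ is exactly $m_{red}$ and $|\calN(\calN(X)) \setminus \calN(\calN(X)^{c})| = |C_b|$ is exactly $n_{red}$, so \eqref{eq:Hassibi-condition} reads $m_{red} \ge \delta \ell\, n_{red}$, i.e. $m_{red}/n_{red} \ge \delta \ell$. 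Since we only need $\delta > 1/\ell$, it suffices to guarantee $m_{red} \ge n_{red}$, i.e. that the reduced system is (at least) square.

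First I would pass from the worst-case expansion condition to the average-case statement by invoking the tail bound of Proposition \ref{prop:NR0-deviation} together with Remark \ref{rem:choice-of-k}: for the sparsity regime $k = k(d)$ in which $X = |\supp(b)|$ concentrates sharply around $N_R(k)$, the event ``$m_{red}$ is close to its expectation $N_R(k)$'' (and similarly $n_{red}$ close to $N_C(k)$ via Corollary \ref{cor:Ared}) occurs with high probability; this is precisely the meaning assigned to the phrase \emph{with high probability} in the sentence preceding Proposition \ref{prop:appl-Wang}. Second, I would observe that both $N_R(\cdot)$ and $N_C(\cdot)$ are the expected reduced dimensions, with $N_R$ increasing and the gap $N_R - N_C$ behaving monotonically in $k$ in the relevant range, so that the defining equation \eqref{eq:kcrit-criterion}, $N_R(k_{crit}) = N_C(k_{crit})$, characterizes the largest $k$ for which $N_R(k) \ge N_C(k)$. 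Hence for $k \le k_{crit}$ we have $N_R(k) \ge N_C(k)$, i.e. in expectation (and, by the concentration above, with high probability) $m_{red} \ge n_{red}$.

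Third, with a square-or-overdetermined reduced system in hand, the expansion inequality \eqref{eq:Hassibi-condition} is satisfied with $\delta = m_{red}/(\ell\, n_{red}) \ge 1/\ell$, which is exactly the hypothesis of Proposition \ref{thm:SPCS1} applied to the restriction of $A$ to $C_b$. That proposition then yields a perturbation $\tilde A$ for which $\{x : \tilde A x = \tilde A x^*,\ x \ge 0\}$ is a singleton on the reduced system; combined with Proposition \ref{prop:redfeasSet}, which guarantees that the reduced feasible set determines the full feasible set (the components outside $C_b$ are forced to zero), this gives uniqueness of the nonnegative solution to $\tilde A x = b$ for the original system. Collecting these steps establishes the claim.

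The main obstacle I anticipate is the rigorous handling of the step from expectations to a uniform, worst-case expansion bound over \emph{all} supports $X$ with $|X| \le k$: Proposition \ref{prop:NR0-deviation} controls the deviation of $X$ for a \emph{random} choice of cells and a fixed $k$, not simultaneously over every support, and the dependence of the $X_r$ through ray intersections (Remark \ref{rem:Xr-dependence}) is what forced the martingale argument in the first place. So the honest statement is the average-case one — ``with high probability'' over the random $k$-sparse $x$ — and I would be careful to phrase the conclusion accordingly rather than claiming a deterministic guarantee; the delicate part is choosing $\delta$ in the tail bound as a suitable function of $d$ (as in Remark \ref{rem:choice-of-k}) so that the deviation $|X - N_R^0|$ is $o(N_R - N_C)$ in the regime $k \le k_{crit}$, ensuring the inequality $m_{red} \ge n_{red}$ survives the fluctuations.
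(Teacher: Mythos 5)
Your argument matches the paper's intended reasoning exactly: the paper gives no explicit proof, but Proposition \ref{prop:appl-Hassibi} is meant as the direct combination of Proposition \ref{thm:SPCS1} (reduced system square $\Leftrightarrow$ $\delta>1/\ell$ in \eqref{eq:Hassibi-condition}), the definition \eqref{eq:kcrit-criterion} of $k_{crit}$ as the point where $N_R=N_C$, the concentration of $m_{red}$ around $N_R$ from Proposition \ref{prop:NR0-deviation}, and the support-recovery guarantee of Proposition \ref{prop:redfeasSet}. Your closing caveat about the average-case (rather than uniform-over-supports) nature of the guarantee is also consistent with how the paper itself qualifies the phrase \emph{with high probability} just before Proposition \ref{prop:appl-Wang}.
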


In case Conjecture \ref{conj:SPCS2} holds, uniqueness of $x\in\Rn_{k,+}$ would be guaranteed if $k\le k_{1/\delta}$.
Finally, we comment on the maximal sparsity threshold $k_{opt}$, in case reduced systems would follow a symmetric distribution with respect to 
the origin and columns would be in general position.

\begin{figure}
\centerline{
\includegraphics[width=0.3\textwidth]{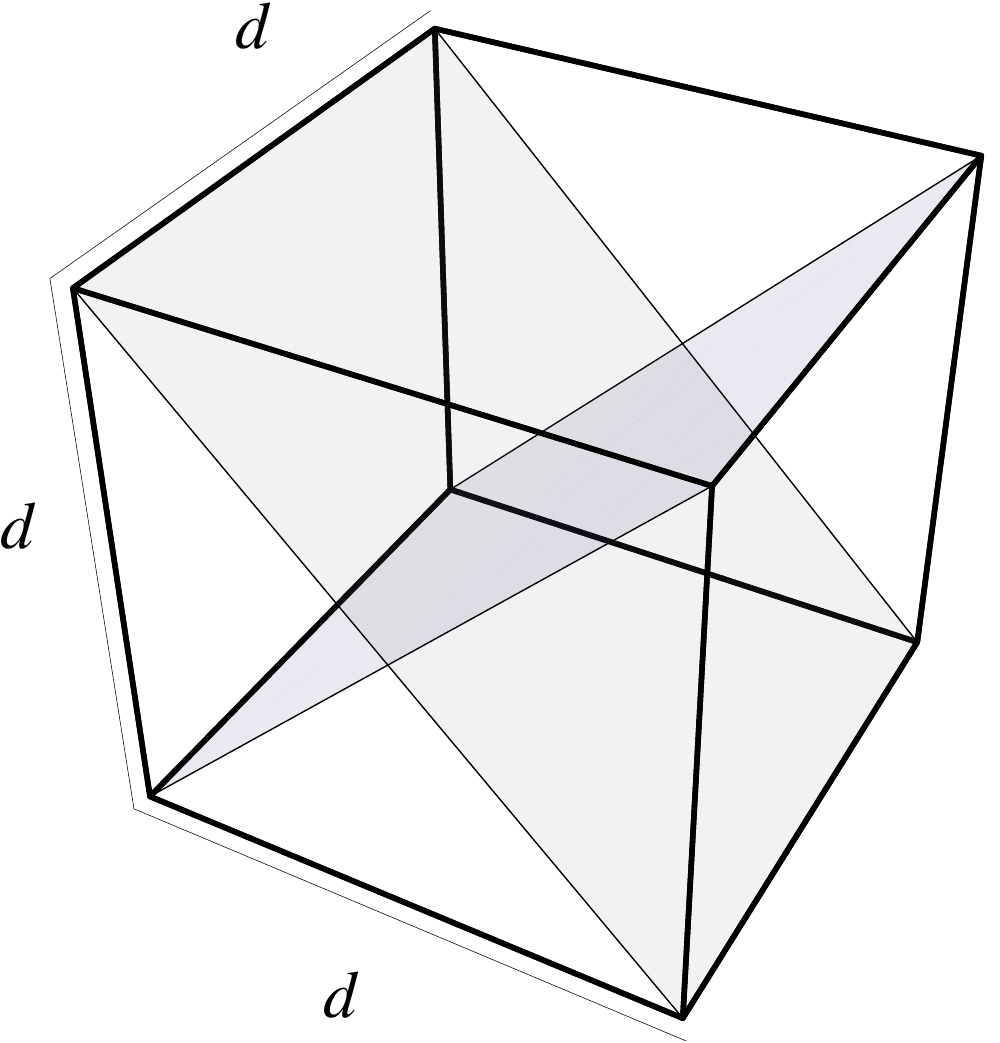}
\hspace{0.025\textwidth}
\includegraphics[width=0.3\textwidth]{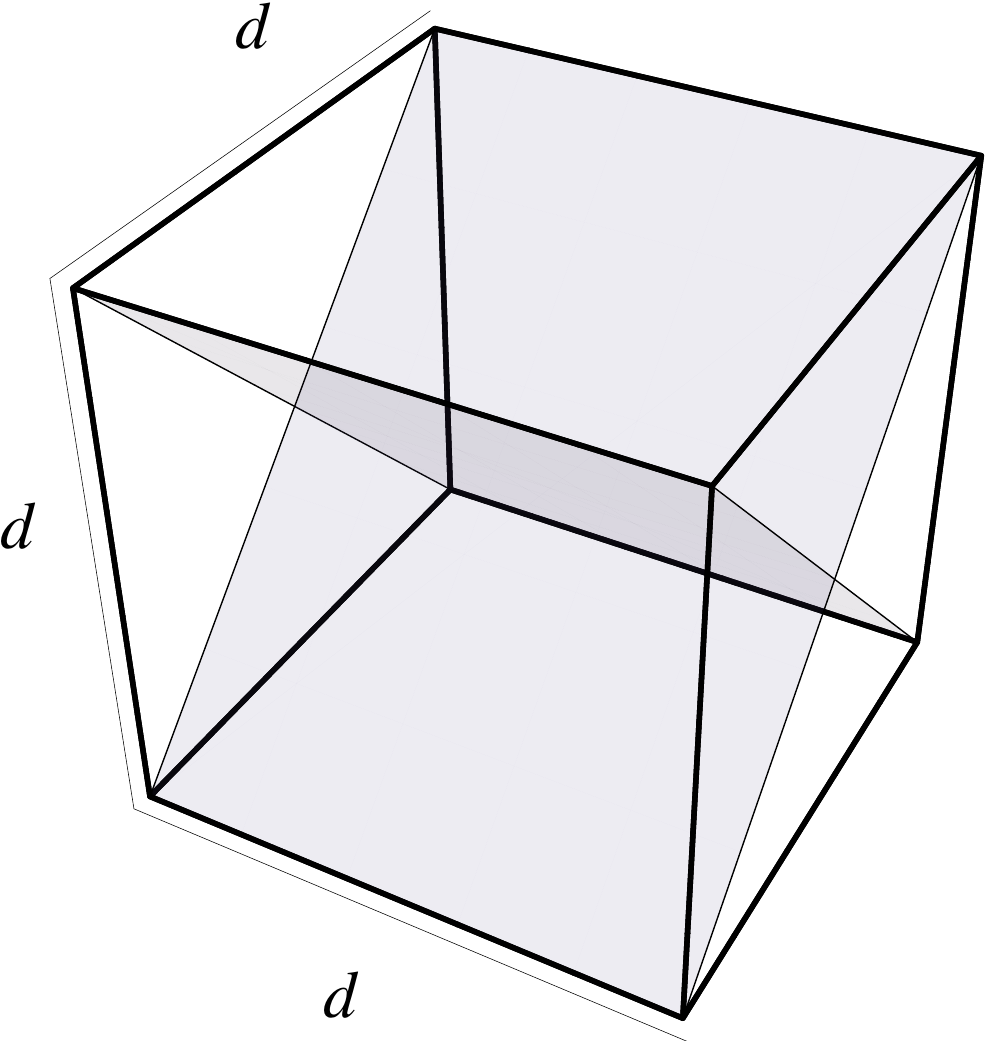}
\hspace{0.025\textwidth}
\includegraphics[width=0.25\textwidth]{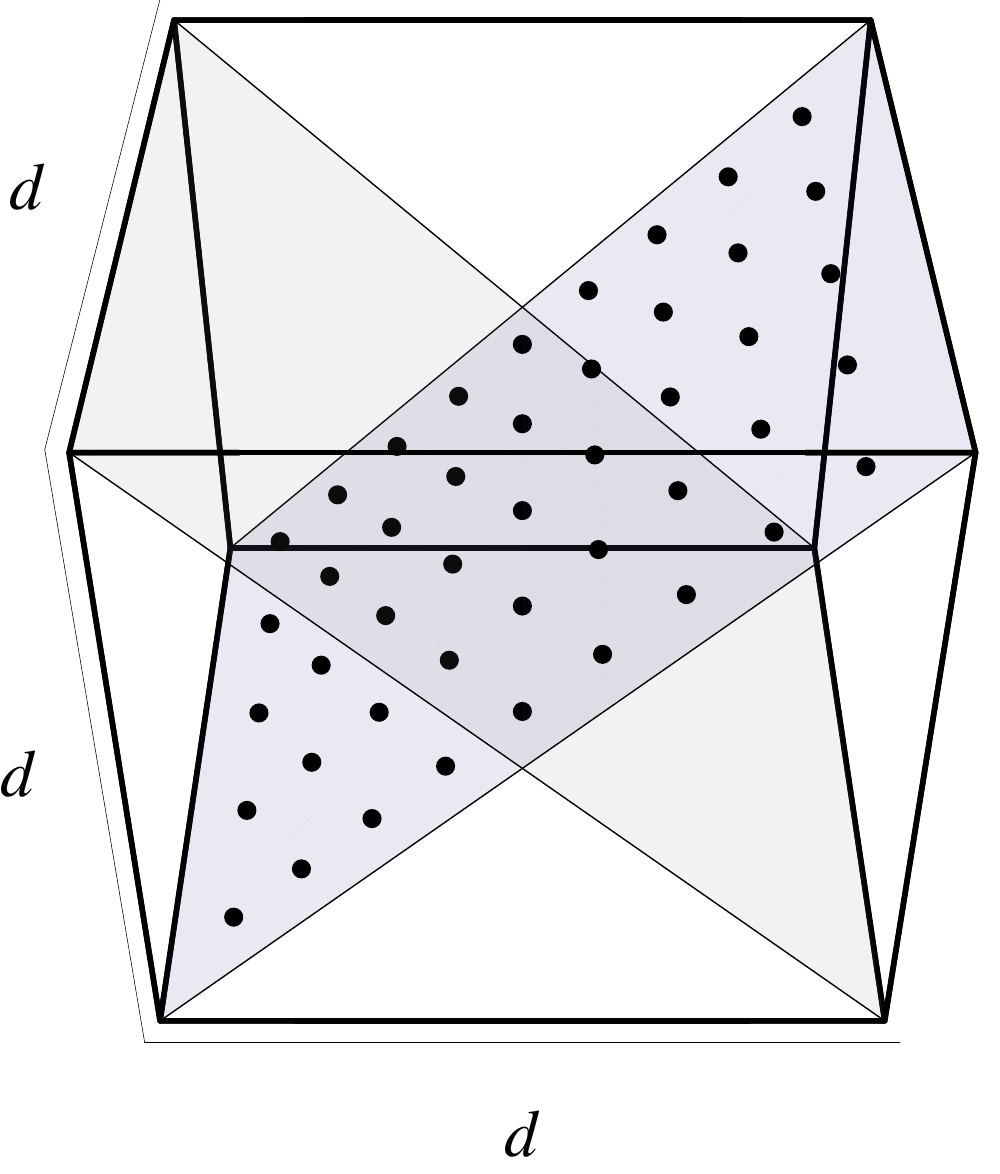}
}
\caption{
Imaging setup with 4 cameras corresponding to the image planes, shown as two pairs in the left and center panel, respectively. Right panel: Cell centers projected onto the first image plane are shown as dots for the case $d=5$. The cube $\Omega = [0,d]^{3}$ is discretized into $d^{3}$ cells and projected along $4 \cdot d (2 d-1)$ rays.
}
\label{fig:4C3D}
\end{figure}
\section{4 Cameras - Left Degree equals 4}\label{sec:4C3D}
We consider the imaging set-up depicted by Figure 
\ref{fig:4C3D} and conduct a probabilistic analysis of its recovery properties, analogous to \ref{sec:hexagon_red_dim}. This scenario 
is straightforward to realize and should also be particularly relevant to practical applications.

\subsection{Imaging Geometry}

Each coordinate of the unit cube $\Omega = [0,d]^{3}$ is discretized into the intervals $\{0,1,2,\dotsc,d\}$, resulting in $d^{3}$ voxels with coordinates
\begin{equation} \label{eq:def-C-cells}
C = \big\{c = (i,j,l)-\frac{1}{2}(1,1,1) \colon i,j,l \in [d]\big\}.
\end{equation}
There are 4 sets of parallel projection rays corresponding to the normals of the image planes depicted in Fig.~\ref{fig:4C3D},
\begin{equation} \label{eq:4-normals}
n^{1} = \frac{1}{\sqrt{2}} (-1,0,1),\quad
n^{2} = \frac{1}{\sqrt{2}} (1,0,1),\quad
n^{3} = \frac{1}{\sqrt{2}} (0,-1,1),\quad
n^{4} = \frac{1}{\sqrt{2}} (0,1,1).
\end{equation}
We denote the set of projection rays and its partition corresponding to the 4 directions 
\begin{equation} \label{eq:R-1-4}
R = \cup_{l=1}^{4} R_{l}.
\end{equation}
Each set $R_{i}$ contains $(2 d-1) \cdot d$ projection rays whose measurements yields a projection image with $(2 d-1) \times d$ pixels. We index and denote the pixels by $(s,t)$, and the projection rays through these pixels by
\[
r^{i}_{s,t} \in R_{i},\quad i\in\{1,2,3,4\}.
\] 
For each cell $c \in C$ indexed by $i,j,l \in [d]$ according to \eqref{eq:def-C-cells}, we represent the corresponding pixels after a suitable transformation by
\begin{subequations} \label{eq:voxel-projections}
 \begin{align}
 (s_{1},t_{1}) &= (i+l-1-d,j), \qquad&\qquad
 s_{1} &\in [1-d,d-1],\; t_{1} \in [d], \\
 (s_{2},t_{2}) &= (i-l,j), \qquad&\qquad
 s_{2} &\in [1-d,d-1],\; t_{2} \in [d], \\
 (s_{3},t_{3}) &= (i,j+l-1-d), \qquad&\qquad
 s_{3} &\in [d],\; t_{3} \in [1-d,d-1], \\
 (s_{4},t_{4}) &= (i,j-l), \qquad&\qquad
 s_{4} &\in [d],\; t_{4} \in [1-d,d-1].
 \end{align}
\end{subequations}
The cardinalities of the projection rays, i.e.~the number of cells covered by each projection ray, are
\begin{subequations} \label{eq:r1234-cells}
\begin{align}
a \in \{1,2\}\colon\qquad
|r^{a}_{s,t}| &= d-|s|,
\qquad s \in [1-d,d-1],\quad
t \in [d], 
\label{eq:r12-cells} \\ \label{eq:r34-cells}
b \in \{3,4\}\colon\qquad
|r^{b}_{s,t}| &= d-|t|,
\qquad s \in [d],\quad
t \in [1-d,d-1].
\end{align}
\end{subequations}
We observe the symmetries
\begin{equation} \label{eq:r1234-symmetry}
 |r^{a}_{-s,t}| = |r^{a}_{s,t}|,\qquad
 |r^{a}_{t,s}| = |r^{b}_{s,t}|
\end{equation}
and define 
\begin{equation} \label{eq:def-rs}
 |r_{s}| := |r^{a}_{s,1}|
\end{equation}
because $|r^{a}_{s,t}|$ does not vary with $t$.
Summing up the cells covered by all rays along the first direction, for example, we obtain by \eqref{eq:r12-cells}, \eqref{eq:r1234-symmetry} and \eqref{eq:def-rs},
\begin{align*}
\sum_{r^{1} \in R_{1}} |r^{1}|
&= \sum_{t \in [d]} \sum_{s=1-d}^{d-1} |r^{1}_{s,t}| = 
d \sum_{s=1-d}^{d-1}|r_{s}| = 
d \big(d + 2 \sum_{s=1}^{d-1} (d-s)\big) = d^{3} = |C|.
\end{align*}
We set
\begin{equation} \label{eq:def-Rk1k2}
\begin{aligned}
 R(k_{1},k_{2}) &:= 
 \Big(1-\frac{|r_{k_{1}}| + |r_{k_{2}}|-1}{d^{3}}\Big)^{k}, \\
 R(k_{1},k_{2},k_{3}) &:= 
 \Big(1-\frac{|r_{k_{1}}| + |r_{k_{2}}| + |r_{k_{3}}|
 -2}{d^{3}}\Big)^{k}, \\
 R(k_{1},k_{2},k_{3},k_{4}) &:= 
 \Big(1-\frac{|r_{k_{1}}| + |r_{k_{2}}| + |r_{k_{3}}| + |r_{k_{4}}|
 -3}{d^{3}}\Big)^{k}.
\end{aligned}
\end{equation}

We conduct next for this setup the analysis analogous to Section \ref{sec:hexagon_red_dim}, in order to compute the expected 
size of the reduced system \eqref{def:mn-red} for random $k$-sparse vectors $x$.

\subsection{Dimensions of Reduced Systems}
\label{sec:cube_red_dim}

We first compute the expected number of measurements $m_{red}$ \eqref{def:mn-red} as a function of the sparsity parameter $k$.
\begin{lemma}\label{lem:NR-cube}
The expected number $m_{red} = N_{R}$ of non-zero measurements is
\begin{subequations}
\begin{align}
N_{R} &= N_{R}(k) = \EE[|\supp(b)|] = |R|-N_{R}^{0}
= 4 d (2 d-1) - N_{R}^{0}, \\
N_{R}^{0} &= 4 d \bigg( \Big(1-\frac{1}{d^{2}}\Big)^{k} + 2 
\sum_{s=1}^{d-1} \Big(1-\frac{s}{d^{3}}\Big)^{k} \bigg).
\end{align}
\end{subequations}
\end{lemma}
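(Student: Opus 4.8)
The plan is to mirror the argument of Lemma \ref{lem:NR0}: for each projection ray $r\in R$ introduce the Bernoulli random variable $X_r$ equal to $1$ iff ray $r$ records a zero-measurement (no selected cell is incident with $r$), set $X=\sum_{r\in R}X_r$, and compute $N_R^0=\EE[X]$ by linearity of expectation; then $N_R=|R|-N_R^0$, where $|R|=4d(2d-1)$ follows from the partition \eqref{eq:R-1-4} together with the fact that each $R_i$ contains $(2d-1)\cdot d$ rays. The dependence among the $X_r$ (intersecting rays) is irrelevant here, since only the first moment is needed; cf.~Remark \ref{rem:Xr-dependence}.

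First I would establish, exactly as in \eqref{eq:exp-Xr} and under the model of $k$ independent cell selections (Remark \ref{rem:ass-replace}), that $\EE[X_r]=\Pr[X_r=1]=(1-|r|/|C|)^k$, where $|C|=d^3$; the identity $\sum_{r^1\in R_1}|r^1|=d^3$ recorded just before the lemma confirms $|C|=d^3$. Linearity of expectation then gives $N_R^0=\sum_{r\in R}(1-|r|/d^3)^k$.

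Next I would evaluate this sum using the explicit cell counts \eqref{eq:r1234-cells}. For a direction $a\in\{1,2\}$ the rays are indexed bijectively by $(s,t)$ with $s\in[1-d,d-1]$, $t\in[d]$, and $|r^a_{s,t}|=d-|s|$ does not depend on $t$; by the symmetry \eqref{eq:r1234-symmetry} the directions $b\in\{3,4\}$ behave identically after exchanging the roles of $s$ and $t$. Hence each of the four directions contributes the same amount, and the inner sum over $t$ merely yields a factor $d$, so
\[
N_R^0 = 4d \sum_{s=1-d}^{d-1}\Big(1 - \frac{d-|s|}{d^3}\Big)^k.
\]
Finally I would separate the $s=0$ term, which equals $(1-1/d^2)^k$, pair each $s\in\{1,\dots,d-1\}$ with $-s$, and substitute $s'=d-s$ in the remaining sum to obtain $2\sum_{s=1}^{d-1}(1-s/d^3)^k$. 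This gives the stated closed form for $N_R^0$, whence $N_R=4d(2d-1)-N_R^0$.

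There is no genuine obstacle: the only point requiring care is the bookkeeping of the ray enumeration — verifying that $(s,t)\mapsto r^a_{s,t}$ is a bijection onto $R_a$, that $|r^a_{s,t}|$ is genuinely independent of $t$, and that \eqref{eq:r1234-symmetry} places directions $3,4$ on the same footing as $1,2$ — together with the harmless reindexing $s'=d-s$ that converts $\sum_{s=1}^{d-1}(1-(d-s)/d^3)^k$ into $\sum_{s=1}^{d-1}(1-s/d^3)^k$.
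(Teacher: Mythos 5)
Your proposal is correct and follows essentially the same route as the paper: linearity of expectation over the Bernoulli variables $X_r$ (as in Lemma \ref{lem:NR0}), reduction to one direction by the fourfold symmetry, and evaluation of the resulting sum via $|r^a_{s,t}|=d-|s|$ from \eqref{eq:r12-cells}. The paper's proof is just a terser version of the same computation; your explicit separation of the $s=0$ term and the reindexing $s'=d-s$ are exactly the bookkeeping it leaves implicit.
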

\begin{proof}
Taking into account symmetry, we have
\begin{align*}
 N_{R}^{0} &= \EE\Big[\sum_{r \in R} X_{r}\Big]
 = \sum_{r \in R} p_{r}^{k}
 = 4 \sum_{r^{1} \in R_{1}} \Big(1-\frac{|r^{1}|}{|C|}\Big)^{k}.
\end{align*}
Applying \eqref{eq:r12-cells} yields the assertion.
\end{proof}
\begin{proposition}\label{prop:NC-cube}
 The expected size $n_{red} = N_{C}$ of subset of cells that support random subsets $R_{b} \subset \R$ of observed non-zero measurements, is 
\begin{equation}
 N_{C} = N_{C}(k) = 
 d^{3} - N_{C}^{1} + N_{C}^{2} - N_{C}^{3} + N_{C}^{4}
\end{equation}
where
\begin{equation}
\begin{aligned}
 N_{C}^{1} &=  4 d \bigg( d \Big(1-\frac{1}{d^{2}}\Big)^{k} + 
  2 \sum_{s=1}^{d-1} s \Big(1-\frac{s}{d^{3}}\Big)^{k} \bigg), \\
 N_{C}^{2} &= 2 d \sum_{i,l \in [d]} R(l+i-1-d,i-l) 
 + 4 \sum_{i,j,l\in [d]} R(l-i,l-j), \\
 N_{C}^{3} &= 2 \sum_{i,j,l\in [d]}\big(
 R(l+i-1-d,l-i,l-j) + R(l-i,l-j,l+j-1-d) \big), \\
 N_{C}^{4} &= \sum_{i,j,l \in [n]} R(l+i-1-d,l-i,l+j-1-d,l-j),
\end{aligned}
\end{equation}
end the functions $R$ are given by \eqref{eq:def-Rk1k2}.
\end{proposition}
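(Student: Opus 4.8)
The plan is to follow the template of Proposition~\ref{prop:NC0}: write $N_{C} = d^{3} - N_{C}^{0}$ where $N_{C}^{0} := |C| - N_{C}$ is the expected number of cells recognizable as empty, expand $N_{C}^{0}$ by inclusion--exclusion over the $\ell = 4$ rays incident with each cell, and then match terms. First I would record that each cell $c = (i,j,l) - \tfrac{1}{2}(1,1,1)$ is incident with exactly one ray $r_{i}(c) \in R_{i}$ per direction $i \in \{1,2,3,4\}$, so that the defining relation $C_{b} = \mc{N}(R_{b}) \setminus \mc{N}(R_{b}^{c})$ simplifies to: $c \notin C_{b}$ iff at least one of $r_{1}(c),\dots,r_{4}(c)$ is a zero-measurement ray (the sub-case "all four zero'' being absorbed into "at least one zero''). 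Hence, by linearity of expectation, $N_{C}^{0} = \sum_{c \in C} \Pr\big[\bigcup_{i=1}^{4}\{X_{r_{i}(c)} = 1\}\big]$, and inclusion--exclusion on the four events $\{X_{r_{i}(c)}=1\}$ at the level of each $c$, followed by interchanging the sums, defines $N_{C}^{1}$ (singles), $N_{C}^{2}$ (pairs), $N_{C}^{3}$ (triples), $N_{C}^{4}$ (the one quadruple) and yields $N_{C}^{0} = N_{C}^{1} - N_{C}^{2} + N_{C}^{3} - N_{C}^{4}$, i.e.\ $N_{C} = d^{3} - N_{C}^{1} + N_{C}^{2} - N_{C}^{3} + N_{C}^{4}$.

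Next I would evaluate the joint probabilities. For any $J \subseteq \{1,2,3,4\}$ I would show, using the pixel parametrization \eqref{eq:voxel-projections}, that the rays $\{r_{i}(c)\}_{i \in J}$ meet pairwise (and in all higher orders) exactly in $c$, so that $\big|\bigcup_{i\in J} r_{i}(c)\big| = \sum_{i \in J} |r_{i}(c)| - (|J|-1)$. Since the $k$ particles are placed independently and uniformly (Remark~\ref{rem:ass-replace}), the event "$X_{r_{i}(c)}=1$ for all $i\in J$'' is "no particle lands in $\bigcup_{i\in J} r_{i}(c)$'', of probability $\big(1 - (\sum_{i\in J}|r_{i}(c)| - |J| + 1)/d^{3}\big)^{k}$. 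Reading off $|r_{1}(c)| = d - |i{+}l{-}1{-}d|$, $|r_{2}(c)| = d - |i{-}l|$, $|r_{3}(c)| = d - |j{+}l{-}1{-}d|$, $|r_{4}(c)| = d - |j{-}l|$ from \eqref{eq:r1234-cells} and comparing with \eqref{eq:def-rs} and \eqref{eq:def-Rk1k2} identifies each such probability with the appropriate $R(\cdot)$, e.g.\ $\Pr[X_{r_{1}(c)}=X_{r_{2}(c)}=X_{r_{3}(c)}=1] = R(l{+}i{-}1{-}d,\, i{-}l,\, j{+}l{-}1{-}d)$; here one also uses that $|r_{s}|$ depends only on $|s|$, so arguments $s$ and $-s$ are interchangeable inside $R(\cdot)$.

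It then remains to sum over $c = (i,j,l) \in [d]^{3}$. For $N_{C}^{1}$ the direction-$1$ summand depends only on $(i,l)$, so the $j$-sum produces a factor $d$, and the reparametrization $s = \big|\,d - |r_{1}(c)|\,\big|$ collapses it to $d\big(d(1-1/d^{2})^{k} + 2\sum_{s=1}^{d-1} s\,(1-s/d^{3})^{k}\big)$; the evident symmetry among the four directions gives the overall factor $4$, yielding $N_{C}^{1}$. For $N_{C}^{2}$, the two "parallel-orientation'' pairs $\{1,2\}$ and $\{3,4\}$ have summands independent of $j$ resp.\ $i$ (factor $d$) and are exchanged by the $(i\leftrightarrow j,\ \text{dir }1{\text -}2 \leftrightarrow \text{dir }3{\text -}4)$ symmetry, producing the first term $2d\sum_{i,l} R(l{+}i{-}1{-}d,\, i{-}l)$; the four mixed pairs all reduce to $\sum_{i,j,l} R(l{-}i,\, l{-}j)$ after the lattice reflections $i \mapsto d{+}1{-}i$, $j \mapsto d{+}1{-}j$ (which preserve $[d]^{3}$ and, combined with $|r_{s}| = |r_{-s}|$, put the arguments in the stated form), giving $4\sum_{i,j,l} R(l{-}i,\, l{-}j)$. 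The same reflection/exchange arguments collapse the four triples into the two displayed families of $N_{C}^{3}$, each with multiplicity $2$, and leave the single quadruple as $N_{C}^{4}$ (no reflection needed there, only the cosmetic rewriting $s \to -s$).

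The main obstacle is the bookkeeping in this last step: determining which of the $\binom{4}{2}=6$ pairs and $\binom{4}{3}=4$ triples collapse to two-index rather than three-index sums, and justifying the multiplicities $2d$, $4$, $2$, $1$ via the precise action of the reflections $i\mapsto d{+}1{-}i$, $j\mapsto d{+}1{-}j$ and the $1{\text -}2 \leftrightarrow 3{\text -}4$ exchange on the arguments of $R(\cdot)$. The geometric input of the second step --- that distinct-direction rays through a common cell meet in exactly that cell, so the union-cardinality formula holds --- also needs a short check from \eqref{eq:voxel-projections}, but is routine.
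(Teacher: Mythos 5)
Your proposal is correct and follows essentially the same route as the paper: the paper computes $N_{C}=\EE\big[\sum_{c\in C}\prod_{i=1}^{4}(1-X_{r^{i}_{c}})\big]$ and expands the product, which is algebraically identical to your inclusion--exclusion on $N_{C}^{0}$ over the four incident rays, and the subsequent evaluation (union cardinalities $\sum_{i\in J}|r^{i}_{c}|-(|J|-1)$ from rays meeting only in $c$, factors of $d$ from $j$- or $i$-independence, and the reflections $i\mapsto d{+}1{-}i$, $j\mapsto d{+}1{-}j$ together with $|r_{s}|=|r_{-s}|$ to collapse the pairs and triples with multiplicities $2d,4,2,2$) matches the paper's term-by-term computation. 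The only quibble is cosmetic: in the $N_{C}^{1}$ step the substitution that yields $2\sum_{s=1}^{d-1}s(1-s/d^{3})^{k}$ is $s=|r_{1}(c)|=d-|i{+}l{-}1{-}d|$ rather than $s=|i{+}l{-}1{-}d|$ as you wrote, but your final formula is the correct one.
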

\begin{proof}
We consider for each cell $c \in C$ the quadruple of projection rays $(r^{1}_{C},r^{2}_{C},r^{3}_{C},r^{4}_{C})$ meeting in this cell, and the corresponding partition \eqref{eq:R-1-4} of projection rays. Cell $c$ is contained in the set $C_{b}$ \eqref{eq:def-RbCb} supporting $R_{b}$ if not any ray of the corresponding quadruple returns a zero measurement. Thus,
\begin{equation} \label{eq:NC-all-proof}
\begin{aligned}
 N_{C} &= \EE\Big[\sum_{c \in C}
 (1-X_{r^{1}_{c}}) (1-X_{r^{2}_{c}}) 
 (1-X_{r^{3}_{c}}) (1-X_{r^{4}_{c}})
 \Big] \\
 &= \sum_{c \in C} \Big(1 - \sum_{i=1}^{4} \EE[X_{r^{i}_{c}}]
 + \sum_{1 \leq i \leq j \leq 4} \EE[X_{r^{i}_{c}} X_{r^{j}_{c}}]
 - \sum_{1 \leq i \leq j \leq l \leq 4} 
 \EE[X_{r^{i}_{c}} X_{r^{j}_{c}} X_{r^{l}_{c}}]
 + \EE[X_{r^{1}_{c}} X_{r^{2}_{c}} X_{r^{3}_{c}} X_{r^{4}_{c}}]
 \Big) \\
 &= \sum_{c \in C} \bigg(1 
 - \sum_{i=1}^{4} \Big(1-\frac{|r^{i}_{c}|}{d^{3}}\Big)^{k}  
 + \sum_{1 \leq i \leq j \leq 4} 
 \Big(1-\frac{|r^{i}_{c} \cup r^{j}_{c}|}{d^{3}}\Big)^{k} \\
 &\qquad
 - \sum_{1 \leq i \leq j \leq l \leq 4} 
 \Big(1-\frac{|r^{i}_{c} \cup r^{j}_{c} \cup r^{l}_{c}|}{d^{3}} \Big)^{k}
 + \Big(1-\frac{|\cup_{i=1}^{4} r^{i}_{c}|}{d^{3}} \Big)^{k}
 \bigg)
\end{aligned}
\end{equation}
We consider each term in turn. 
\begin{enumerate}[(i)]
\item
As for the first term, we obviously have $|C|=d^{3}$. 
\item
Concerning the second term,  
taking symmetry into account we compute, 
\begin{align*}
  \sum_{c \in C} \sum_{i=1}^{4} 
  \Big(1-\frac{|r^{i}_{c}|}{d^{3}}\Big)^{k}
  &= 4 \sum_{c \in C} \Big(1-\frac{|r^{1}_{c}|}{d^{3}}\Big)^{k}
  = 4 \sum_{r^{1} \in R_{1}} \sum_{c \in r^{1}} 
  \Big(1-\frac{|r^{1}_{c}|}{d^{3}}\Big)^{k}.
\end{align*}
Since $r^{1}_{c} = r^{1}$ for all $c \in r^{1}$, we obtain using \eqref{eq:r1234-cells},
\begin{align*}
\sum_{c \in C} \sum_{i=1}^{4} \EE[X_{r^{i}_{c}}]
  &= 4 \sum_{r^{1} \in R_{1}} 
  |r^{1}| \Big(1-\frac{|r^{1}|}{d^{3}}\Big)^{k} 
  = 4 d \sum_{s_{1} = 1-d}^{d-1} 
  |r^{1}_{s_{1},t_{1}}| 
  \Big(1-\frac{|r^{1}_{s_{1},t_{1}}|}{d^{3}}\Big)^{k} \\
  &= 4 d \bigg( d \Big(1-\frac{1}{d^{2}}\Big)^{k} + 
  2 \sum_{s=1}^{d-1} s \Big(1-\frac{s}{d^{3}}\Big)^{k} \bigg).
\end{align*}
\item
Concerning the third term of \eqref{eq:NC-all-proof}, we consider first the contribution of the pair of directions $(i,j) = (1,2)$. 
Replacing cell-indices of projection rays by pixel indices according to \eqref{eq:voxel-projections}, we have using \eqref{eq:r1234-cells} and \eqref{eq:def-rs},
\begin{equation} \label{eq:NC-r12-term}
\begin{aligned}
 \sum_{c \in C} 
 \Big(1-\frac{|r^{1}_{c} \cup r^{2}_{c}|}{d^{3}}\Big)^{k}
 &= \sum_{i,j,l \in [d]} 
 \Big(1-\frac{|r^{1}_{(i+l-1-d,j)}| + |r^{2}_{(i-l,j)}| 
 - 1}{d^{3}}\Big)^{k} \\
 &= d \sum_{i,l \in [d]} 
 \Big(1-\frac{|r_{i+l-1-d}| + |r_{i-l}| 
 - 1}{d^{3}}\Big)^{k} \\
 &= d \sum_{i,l \in [d]} R(i+l-1-d,i-l),
\end{aligned}
\end{equation}
where the factor $d$ appears because the summand does not depend on $j$ by \eqref{eq:r12-cells}, and $R$ is defined by \eqref{eq:def-Rk1k2}. For the pair of directions $(3,4)$, we get
\begin{equation} \label{eq:NC-r34-term}
\begin{aligned}
 \sum_{c \in C} 
 \Big(1-\frac{|r^{3}_{c} \cup r^{4}_{c}|}{d^{3}}\Big)^{k}
 &= \sum_{i,j,l \in [d]} 
 \Big(1-\frac{|r^{3}_{(i,j+l-1-d)}| + |r^{4}_{(i,j-l)}| 
 - 1}{d^{3}}\Big)^{k},
\end{aligned}
\end{equation}
which equals \eqref{eq:NC-r12-term} due to the symmetry \eqref{eq:r1234-symmetry}.

Next, we consider the pair of directions $(1,3)$. Taking into account the symmetry \eqref{eq:r1234-symmetry} and using \eqref{eq:def-Rk1k2},  we obtain
\begin{equation}
\begin{aligned}
 &\sum_{c \in C} 
 \Big(1-\frac{|r^{1}_{c} \cup r^{3}_{c}|}{d^{3}}\Big)^{k}
 = \sum_{i,j,l \in [d]} 
 \Big(1-\frac{|r^{1}_{(i+l-1-d,j)}| + |r^{3}_{(i,j+l-1-d)}| 
 - 1}{d^{3}}\Big)^{k} \\
 &\quad= \sum_{i,j,l \in [d]} 
 \Big(1-\frac{|r_{i+l-1-d}| + |r_{j+l-1-d}| 
 - 1}{d^{3}}\Big)^{k} 
 = \sum_{i,j,l \in [d]} 
 \Big(1-\frac{|r_{l-i}| + |r_{l-j}| - 1}{d^{3}}\Big)^{k} 
 \end{aligned}
\end{equation}
In the same way it can be shown that the remaining pairs of directions
$(1,4), (2,3), (2,4)$ each contributes the last expression.
\item
Concerning the fourth term of (4.10), we get for the triple of directions $(1,2,3)$ the contribution
\begin{equation}
\begin{aligned}
 \sum_{c \in C} \Big(1-\frac{|r^{1}_{c} \cup r^{2}_{c} \cup r^{3}_{c}|}{d^{3}} \Big)^{k}
 &= \sum_{i,j,l \in [n]} 
 \Big(1-\frac{|r_{i+l-1-d}| + |r_{i-l}|
 + |r_{j+l-1-d}| - 2}{d^{3}}\Big)^{k} \\
 &= \sum_{i,j,l \in [n]} 
 R(i+l-1-d,l-i,j+l-1-d)\big),
\end{aligned}
\end{equation}
and likewise for the remaining triples
\begin{equation}
\begin{aligned}
 (1,2,4)\colon\qquad &
 \sum_{i,j,l \in [n]} R(i+l-1-d,l-i,l-j), \\
 (1,3,4)\colon\qquad &
 \sum_{i,j,l \in [n]} R(i+l-1-d,j+l-1-d,l-j), \\
 (2,3,4)\colon\qquad &
 \sum_{i,j,l \in [n]} R(l-i,j+l-1-d,l-j).
\end{aligned}
\end{equation}
Evidently, the first and last pair of expressions are equal, respectively.
\item
Finally, the expression for the last term of \eqref{eq:NC-all-proof} is immediate.
\end{enumerate}
\end{proof}

We conclude this section by stressing that critical sparsity values
$k_\delta$ \eqref{eq:kdelta-criterion}, $k_{crit}$ \eqref{eq:kcrit-criterion},
$k_{opt}$ \eqref{eq:kopt-criterion}, $k_{1/\delta})$ \eqref{eq:kinv-criterion}, can be worked out
based on the just derived values $N_R$ and $N_C$. 
A tail bound may be derived analogously to Prop. 3.5. We omit this redundant detail due to space constraints.

\begin{figure}
\begin{center}
\begin{tabular}{cc}
\includegraphics[clip,width=0.45\textwidth]{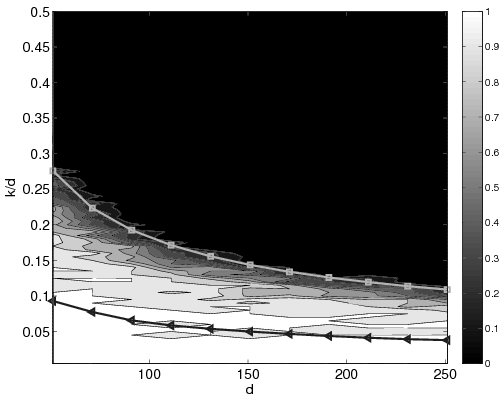} &
\includegraphics[clip,width=0.45\textwidth]{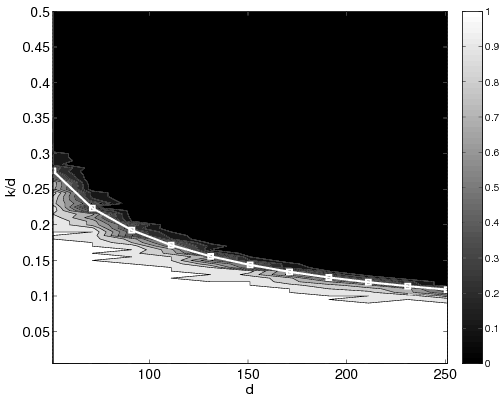}\\
\includegraphics[clip,width=0.45\textwidth]{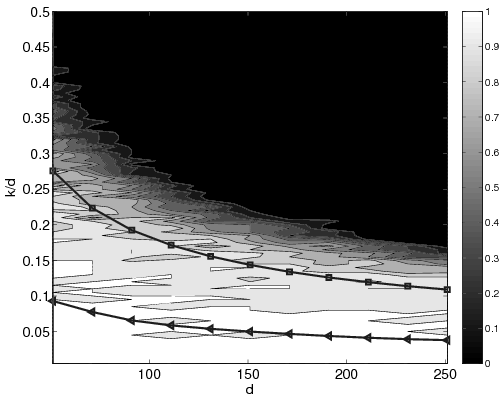}&
\includegraphics[clip,width=0.45\textwidth]{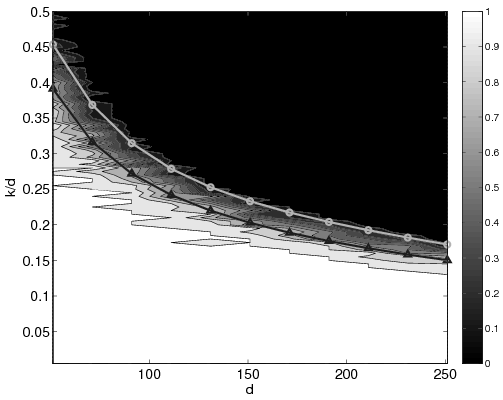}
\end{tabular}
\end{center}
\caption{
Success and failure empirical phase transitions for the 2D, 3-cameras case, hexagonal area, from Section \ref{sec:Hex3C}, Fig. \ref{fig:3_cam}, left.
Reduced unperturbed (top left) and perturbed (top right) matrices are overdetermined
and of full rank with hight probability if the corresponding sparsity level  is below $k_{\delta}$ (unperturbed case) or
$k_{crit}$ (perturbed case). The $\triangleleft$-marked curve depicts $\tilde k_{\delta}/d^2$ \eqref{eq:kdelta-criterion}, 
and the $\square$-marked curve, $k_{crit}/d^2$ from \eqref{eq:kdelta-criterion}.
Probability of uniqueness in $[0,1]^n$ of a $k=\rho d^2$ sparse binary vector
for unperturbed (bottom left) and  perturbed matrices (bottom right).
This probability is high below  $\tilde k_{\delta}/d^2$ and decreases slowly, for the unperturbed case (bottom left).
In the perturbed case the empirical probability of uniqueness exhibits a sharp transition accurately described by
the  $\triangleright$-marked curve $k_{1/\delta}/d^2$ from \eqref{eq:kinv-criterion}, which for the 3 camera case lies under
the $\circ$-marked curve  $k_{opt}$ from \eqref{eq:kopt-criterion}.
}
\label{fig:Prob_Hex_3C}
\end{figure}
\begin{figure}
\begin{center}
\begin{tabular}{cc}
\includegraphics[clip,width=0.45\textwidth]{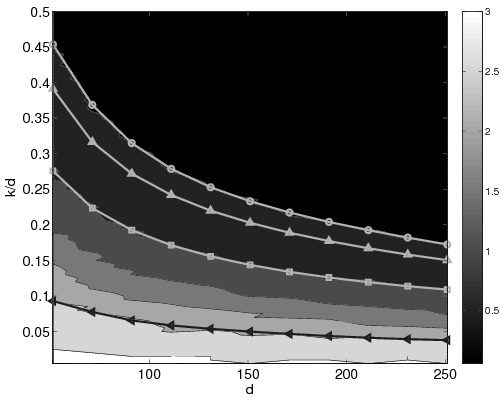} &
\includegraphics[clip,width=0.45\textwidth]{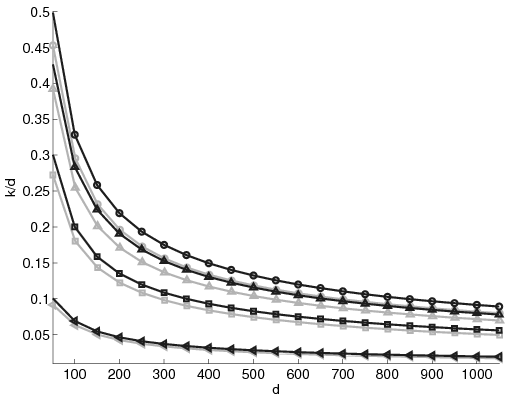}\\
\end{tabular}
\end{center}
\caption{Left: The analytically derived curves from Section \ref{sec:Hex3C} correctly follow the
contour lines of the average fraction of reduced systems determined empirically
as a function of $d$ and relative sparsity $k/d$.
From bottom to top: $\tilde k_{\delta}/d$ ($\triangleleft$-marked curve),
$k_{crit}/d$ (${\square}$-marked curve), $k_{opt}/d$ ($\circ$-marked curve) and
$k_{1/\delta}/d$ ($\triangleleft$-marked curve). Right:
These curves are plotted again (light gray) for a wider range and compared to
the analogous curves for the geometry in Fig. \ref{fig:3_cam}, right, square area, 2 orthogonal, one diagonal
projecting direction.}
\label{fig:HexvsSquare}
\end{figure}

\section{Numerical Experiments and Discussion}\label{sec:num}

In this section we relate the previously derived bounds 
on the required sparsity that guarantee unique nonnegative or binary $k$-sparse 
solutions to numerical experiments. In analogy to \cite{DonTan05} we assess the so called
\emph{phase transition} $\rho$ as a function of $d$, which
is reciprocally proportional to the undersampling ratio $\frac{m}{n}\in(0,1)$.
We vary $d$, build a specific matrix projection matrix $A$ along with 
its perturbed version $\tilde A$ and consider the sparsity as a fraction of $d$ in 2D or $d^2$ in 3D,
respectively, thus $k=\rho d^{D-1}$ , with $\rho\in (0,4)$ and $D\in\{2,3\}$. 
This phase transition $\rho(d)$ indicates the necessary relative sparsity
to recover a $k$-sparse solution with overwhelming probability. More precisely, 
if  $\|x\|_0\le\rho(d) \cdot d^{D-1}$, then with
overwhelming probability a random $k$-sparse nonnegative (or binary) vector $x^*$ is the unique
solution in $\calF_+:=\{x \colon Ax= Ax^*, x\ge 0\}$ or 
$\calF_{ 0,1}:=\{x \colon Ax= Ax^*, x\in[0,1]^n\}$, 
respectively. Uniqueness can be ''verified'' by minimizing and maximizing the same objective $f^\top x$
over $\calF_+$ or $\calF_{ 0,1}$, respectively. If the minimizers coincide for
several random vectors $f$ we claim uniqueness. The resulting linear programs we solved
by a standard LP solver \footnote{MOSEK, \url{http://www.mosek.com/}}.
As shown e.g. in Fig. \ref{fig:sliceA3D} and confirmed by all
our numerical experiments the threshold for a unique nonnegative solution
and a unique  $0/1$-bounded solution are quite close, especially for high values of $d$.

We note that $\tilde A$ has the same sparsity structure as $A$, but random entries 
drawn from the standard uniform distribution on the open interval 
$(0.9,1.1)$. 

Then for $\rho\in[0, 1]$ a $\rho d^{D-1}$-sparse  nonnegative or binary vector
was generated to compute the right hand side measurement vector
and for each $(d,\rho)$-point 50 random problem instances
were generated. A threshold-effect is clearly visible in all figures exhibiting 
parameter regions where the probability of exact reconstruction is
close to one and it is much stronger for the perturbed systems. The results are in excellent
agreement with the derived analytical thresholds. We refer to the 
figure captions for detailed explanations and stress that a threshold-effect is clearly visible in all figures 
exhibiting parameter regions where the probability of exact reconstruction is close to one.

\begin{figure}
\begin{center}
\begin{tabular}{cc}
\includegraphics[clip,width=0.45\textwidth]{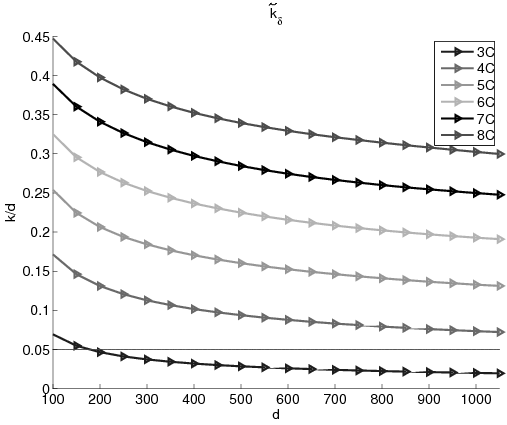} &
\includegraphics[clip,width=0.45\textwidth]{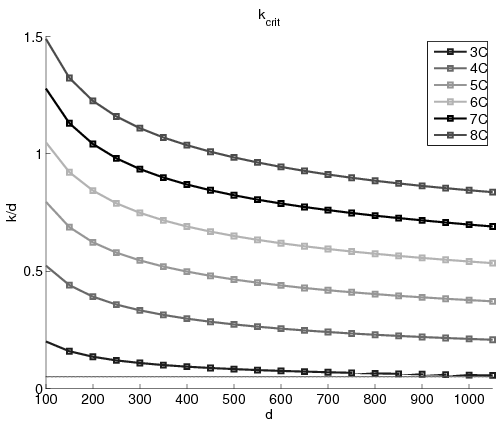}\\
\includegraphics[clip,width=0.45\textwidth]{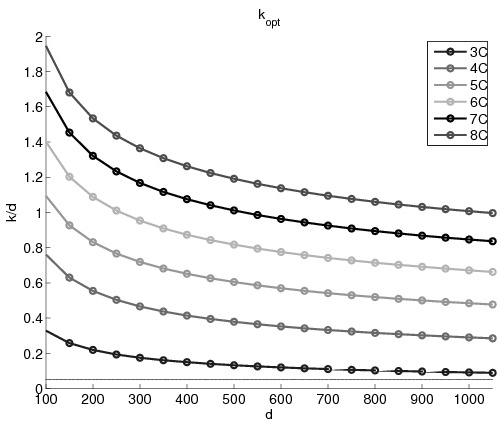} &
\includegraphics[clip,width=0.45\textwidth]{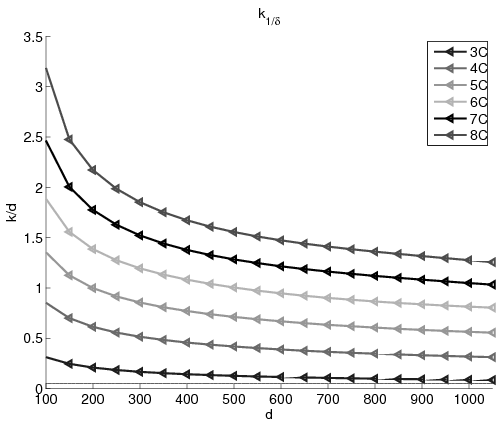}\\
\end{tabular}
\end{center}
\caption{Empirical relative critical curves in for 3 to 8 cameras in 2D.
Top left: $\tilde k_{\delta}/d$ ($\triangleright$-marked curve).
Top right: $k_{crit}/d$ (${\square}$-marked curve).
Bottom left: $k_{opt}/d$ ($\circ$-marked curve).
Bottom right: $k_{1/\delta}/d$ ($\triangleleft$-marked curve). Perfect recovery is possible
for sparsity levels below $k_{1/\delta}$ for perturbed systems.}
\label{fig:k_MoreCams}
\end{figure}
\begin{figure}
\begin{center}
\begin{tabular}{cc}
\includegraphics[clip,width=0.45\textwidth]{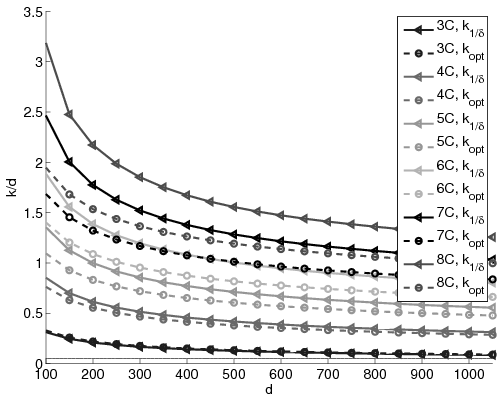} &
\includegraphics[clip,width=0.45\textwidth]{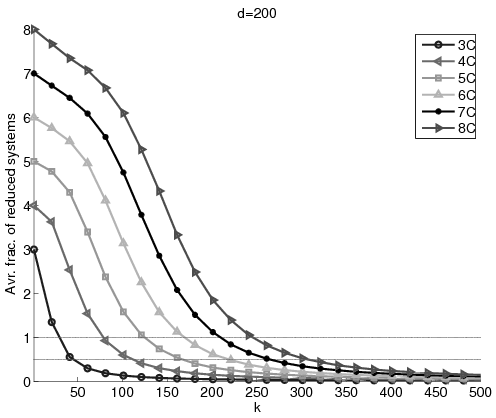}
\end{tabular}
\end{center}
\caption{Left:  $k_{opt}/d$ ($\circ$-marked curve) along with 
$k_{1/\delta}/d$ ($\triangleleft$-marked curve). For 3 cameras $k_{1/\delta}/d$ lies below
$k_{opt}/d$, but starting with 5 cameras  $k_{1/\delta}$ significantly outperforms $k_{opt}$.
This shows the fundamental difference between the considered 0/1-matrices and random matrices
underlying a symmetrical distribution with respect to the origin. For random matrices
recovery of $k$-sparse positive or binary vectors with sparsity levels beyond $k_{opt}$
would be \emph{impossible}. Right: For $d=200$ the 6 curves depict the average ratio of 
$m_{red}(k)/n_{red}(k)$ as a function of sparsity $k$ for 3 to 8 cameras from bottom to top.}
\label{fig:AvrFracMoreCams}
\end{figure}
\begin{figure}
\begin{center}
\begin{tabular}{cc}
\includegraphics[clip,width=0.45\textwidth]{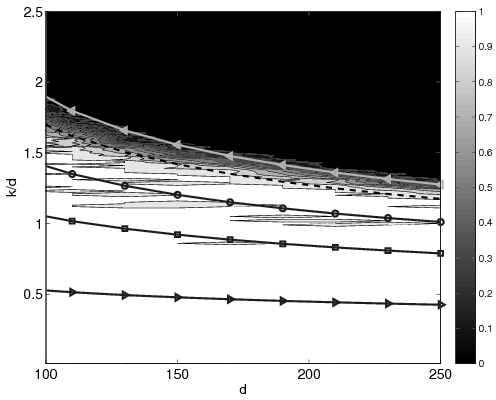} &
\includegraphics[clip,width=0.45\textwidth]{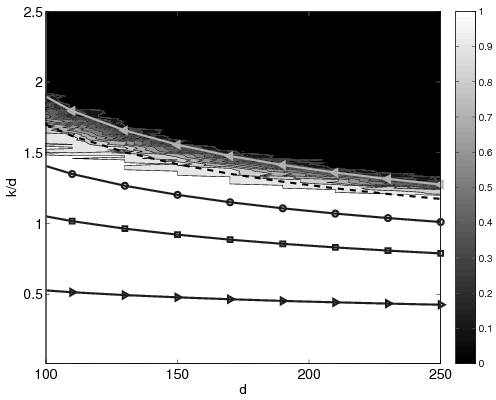}
\end{tabular}
\end{center}
\caption{Success and failure empirical phase transitions for the 2D, 6 cameras case.
Probability of uniqueness in $[0,1]^n$ of a $k=\rho d^2$ sparse binary vector
for unperturbed (left) and  perturbed matrices (right), along with
$\tilde k_{\delta}/d^2$ ($\triangleright$-marked curve),
$k_{crit}/d^2$ (${\square}$-marked curve),
$k_{opt}/d^2$ ($\circ$-marked curve),
and  $k_{1/\delta}/d^2$ ($\triangleleft$-marked curve) from bottom to top.
The dashed line depicts $k/d^2$, with $k$  solving $m_{red}(k)=\frac{2}{\ell} n_{red}(k)$,
which accurately follows the border of the highly success area \emph{for all} considered number
of cameras, $3, 4 \dots 8$.
Recovery is possible \emph{beyond} $k_{opt}$, accurately described by $k_{1/\delta}$.
In the 6 cameras case there is no evident performance boost for perturbed systems, since
the columns of reduced systems are most likely to be in general position for both perturbed and 
unperturbed systems. However, in the perturbed case, recovery is more stable.}
\label{fig:Square_6C}
\end{figure}

\subsection{2D: Hexagonal Volume and 3 Cameras}

In this 2D, 3-cameras case, we generated $A$ according to the geometry 
described in Section \ref{sec:Hex3C}, Fig. \ref{fig:3_cam}, left. We considered $d\in\{51,71,91,\dots ,251\}$
and varied the sparsity $k=\rho d$, by varying $\rho$ in $(0,0.5)$ with constant stepsize $0.01$.
The obtained empirical phase transitions are depicted in \ref{fig:Prob_Hex_3C}.
Fig. \ref{fig:AvrFracMoreCams} additionally shows how the analytically determined critical curves
compare to the critical curves obtained empirically for the geometry in Fig. \ref{fig:3_cam}, right.
The $(4d-1)\times d^2$ projection matrix corresponding to a square area and three projecting directions
has similar reconstruction properties and the critical curves  slightly change by factor $\frac{10}{9}$.

\begin{figure}
\begin{center}
\begin{tabular}{cc}
\includegraphics[clip,width=0.4\textwidth]{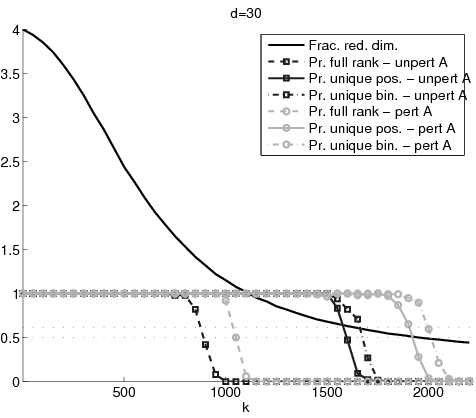} &
\includegraphics[clip,width=0.4\textwidth]{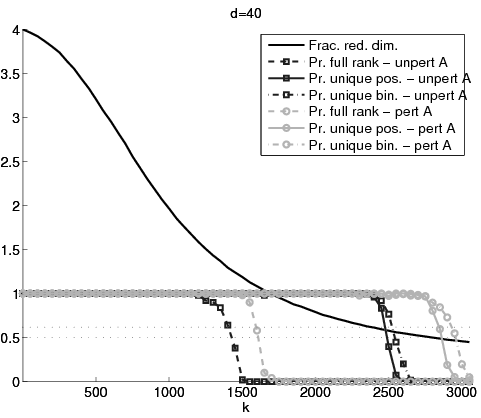}\\
\end{tabular}
\end{center}
\caption{
Recovery via the unperturbed matrix from Section \ref{sec:4C3D}, Fig. \ref{fig:4C3D}
(dark gray $\square$-marked curves), for $d=30$ (left) and $d=40$ (right)
versus the perturbed counterpart (light gray $\circ$-marked curves).
The dashed lines depict the empirical probability (500 trials) that reduced systems are overdetermined and
of full rank. The solid lines show the probability that a $k$ sparse nonnegative vector is unique. The dash-dot curves  shows the probability that a $k$ sparse binary solution is the  unique solution of in $[0,1]^n$. Additional information like binarity gives only a slight performance
boost, as $d$ increases. The curve $\tilde k_{\delta}$ \eqref{eq:kdelta-criterion} correctly predicts that
500 ($d=30$) and 787 ($d=40$) particles are reconstructed with high probability via the unperturbed systems.  
Up to the  sparsity level 1136 ($d=30$) and 1714 ($d=40$) perturbed systems are overdetermined and of full rank 
according to $k_{crit}$ \eqref{eq:kcrit-criterion}. Moreover, perturbed systems have a 
unique $k$ sparse solution if $k$ lies in between $k_{opt}$ - 2028 ($d=30$) and 2856 ($d=40$)
and $k_{1/\delta}$ - 2136 ($d=30$) and 3128 ($d=40$).}
\label{fig:sliceA3D}
\end{figure}

\begin{figure}
\begin{center}
\includegraphics[clip,width=0.45\textwidth]{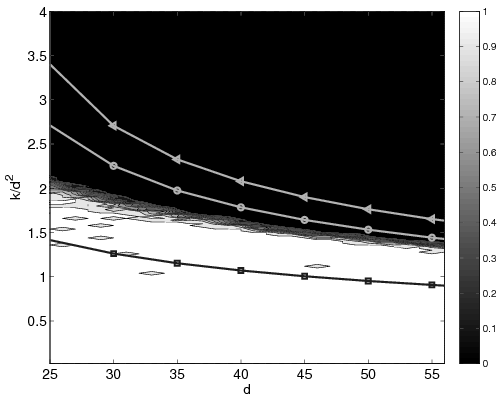}
\includegraphics[clip,width=0.45\textwidth]{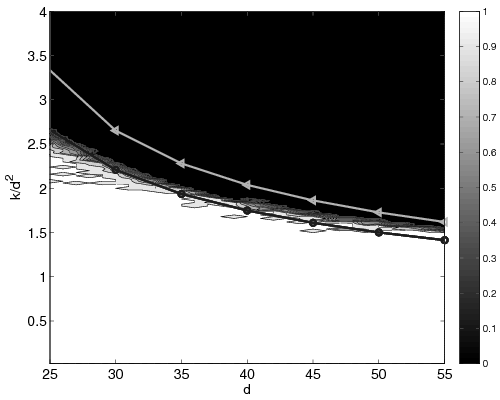}\\
\includegraphics[clip,width=0.45\textwidth]{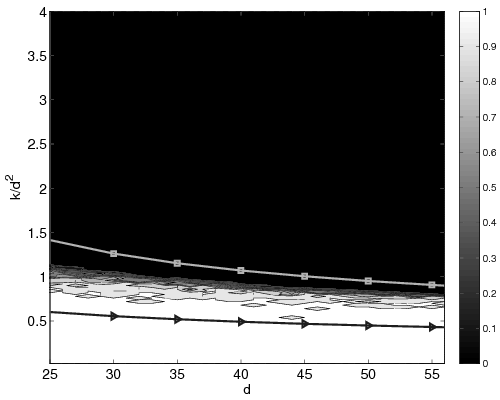} 
\end{center}
\caption{ Success and failure empirical phase transitions for the 3D, 4 cameras case, from Section \ref{sec:4C3D}, Fig. \ref{fig:4C3D}.
Probability of uniqueness in $[0,1]^n$ of a $k=\rho d^2$ sparse binary vector
for unperturbed (top left) and  perturbed matrices (top right).
The $\triangleright$-marked curve depicts $\tilde k_{\delta}/d^2$ \eqref{eq:kdelta-criterion}, 
the $\square$-marked curve $k_{crit}/d^2$ 
\eqref{eq:kcrit-criterion}, the $\circ$-marked curve  $k_{opt}$ \eqref{eq:kopt-criterion}
and the  $\triangleleft$-marked curve $k_{1/\delta}/d^2$ \eqref{eq:kinv-criterion}.
In case of the perturbed matrix $\tilde A$ exact recovery is possible \emph{beyond} 
$k_{opt}/d^2$. Moreover $k_{1/\delta}/d^2$ follows most accurately the empirical phase transition for perturbed systems for high values of $d$.
The empirical probability that the reduced unperturbed matrices are overdetermined
and of full rank (bottom figure), exhibits a threshold in between the estimated relative critical sparsity level $k_{\delta}$ and $k_{crit}$. This explains the performance boost for perturbed systems.}
\label{fig:A4C3D}
\end{figure}

\subsection{2D: Square Volume and 3 to 8 Cameras}

In our theoretical analysis in the previous sections we derived  the expected number of nonzero rows $N_R(k)$ induced by the
$k$-sparse vector along with the number $N_C(k)$ of ''active'' cells which cannot be empty.
This can be done also empirically, compare Fig. \ref{fig:HexvsSquare}, left. 
We have done this in 2D up to 8 projecting directions and obtained \emph{empirically} the critical curves
$k_\delta$, $k_{crit}$, $k_{opt}$ and $k_{1/\delta}$. 
To generate the curves we varied $k/d\in(0,4)$ by a constant stepsize $0.01$
and $d\in\{50,100, \cdots, 1050\}$ and generated for each point $(k/d,d)$
500 problem instances. Further we determined the contour lines of $N_R(k/d,d)/N_C(k/d,d)$
corresponding to the levels $\{\delta\ell,1,0.5,\frac{1+\delta}{\ell}\}$.
The relative sparsity curves $k_\delta/d$, $k_{crit}/d$, $k_{opt}/d$ and $k_{1/\delta}/d$
are plotted in Fig. \ref{fig:k_MoreCams} and accurately follow the empirical recovery thresholds, as shown e.g. in
Fig. \ref{fig:Square_6C} for 6 cameras. Fig. \ref{fig:AvrFracMoreCams} shows 
$N_R(k,200)/N_C(k,200)$ for varying number of cameras.
The projection angles we chosen such that the intersection with all cells is constant, 
yielding binary projection matrices after scaling. Each camera resolution differs with different angle.
We summarize the used parameters in Table \ref{tab:1}.
\begin{table}%
\begin{tabular}{|c|c|c|r|}
	\hline
\# &  $m$    &   $n$  &  projection angles  \\
\hline
3rd camera  & $4d-1$ & $d^2$ & $0^\circ, 90^\circ, 45^\circ$\\
4th camera  & $6d-2$ & $d^2$ & $0^\circ, 90^\circ, \mp 45^\circ$\\
5th camera & $7d + \lfloor \frac{d}{2}\rfloor-2$ & $d^2$ & $0^\circ, 90^\circ, \mp 45^\circ, \arctan(2)$\\
6th camera  & $8d + 2\lfloor \frac{d}{2}\rfloor-2$ & $d^2$ & $0^\circ, 90^\circ, \mp 45^\circ, \mp \arctan(2)$\\
7th camera  & $9d + 3\lfloor \frac{d}{2}\rfloor-2$& $d^2$ & $0^\circ, 90^\circ, \mp 45^\circ, \mp \arctan(2), \arctan(0.5)$\\
8th camera  & $10d + 4\lfloor \frac{d}{2}\rfloor-2$& $d^2$ & $0^\circ, 90^\circ, \mp 45^\circ, \mp \arctan(2), \mp \arctan(0.5)$\\
	\hline
\end{tabular}
\caption{Dimensions of full projection matrices.}
\label{tab:1}
\end{table}

\subsection{3D: 4 Cameras}

In 3D we consider the matrix from Section \ref{sec:4C3D}, Fig. \ref{fig:4C3D}, vary
$d\in\{25,26,\dots,55\}$ and $k=\rho d^2$ by varying $\rho\in(0,4)$ with stepsize $0.01$.
For larger values of $d$ the empirical thresholds follow accurately the estimated curves.
Note that for $d=55$, $A$ is a $48180\times 166375$ matrix.

\section{Conclusion}
The new measurement paradigm of compressed sensing  seeks to capture
the ''essential'' aspects of a high-dimensional but sparse object using as few measurements as
possible by randomization.
Provided that the measurements satisfy certain properties nonnegative sparse signals
can be reconstructed exactly from a surprisingly small number of samples.
Moreover, there exist precise thresholds on sparsity such that for any nonnegative solution that is 
sparser than the threshold is also the unique nonnegative solution of the underlying linear system.
Tomographic projection matrices do not satisfy the conditions which allow 
applying these results on the image reconstruction problem even if the sought solution is very sparse.
However, we analytically showed in the present work that there are thresholds on sparsity 
depending on the numbers of measurements, below which uniqueness is guaranteed and recovery will
succeed and above which it fails with high probability. When recovery succeeds it
yields perfect reconstructions, without any ghost-particles.

\bibliographystyle{plain}

\end{document}